\newtheorem{thm}{Theorem}[section]
\newtheorem{cor}[thm]{Corollary}
\newtheorem{lem}[thm]{Lemma}
\theoremstyle{definition}
\newtheorem{defn}[thm]{Definition}
\newtheorem{example}[thm]{Example}
\theoremstyle{remark}
\newtheorem{rem}[thm]{Remark}
\numberwithin{equation}{section}
\begin{document}

\title{ASYMPTOTICALLY ALMOST PERIODIC 
SOLUTIONS OF FRACTIONAL RELAXATION INCLUSIONS WITH CAPUTO DERIVATIVES}

\author{Marko Kosti\' c}
\address{Faculty of Technical Sciences,
University of Novi Sad,
Trg D. Obradovi\' ca 6, 21125 Novi Sad, Serbia}
\email{marco.s@verat.net}

\begin{abstract}
In the paper under review, we analyze asymptotically almost periodic solutions
for a class of (semilinear) fractional relaxation inclusions with Stepanov almost periodic coefficients. As auxiliary tools, we use subordination principles, fixed point theorems and the well known results on the generation of infinitely differentiable degenerate semigroups with removable singularites at zero. Our results are well illustrated and seem to be not considered elsewhere even for fractional relaxation equations with almost sectorial operators.
\end{abstract}

{\renewcommand{\thefootnote}{} \footnote{2010 {\it Mathematics
Subject Classification. 34G25, 47D03, 47D06, 47D99.
\\ \text{  }  \ \    {\it Key words and phrases.} Abstract semilinear Cauchy inclusions,  Asymptotic almost periodicity, Stepanov asymptotic almost periodicity.
\\  \text{  } The author is partially supported by grant 174024 of Ministry
of Science and Technological Development, Republic of Serbia.}}

\maketitle

\section{Introduction and preliminaries}

The notion of an almost periodic function was introduced by Bohr in 1925 and later generalized by many other mathematicians (see e.g. \cite{diagana}, \cite{gaston}, \cite{hino-bor} and \cite{18}).
Let $I={\mathbb R}$ or $I=[0,\infty),$ and let $f : I \rightarrow X$ be continuous. Given $\epsilon>0,$ we call $\tau>0$ an $\epsilon$-period for $f(\cdot)$ iff
$
\| f(t+\tau)-f(t) \| \leq \epsilon, $ $ t\in I.
$
The set consisted of all $\epsilon$-periods for $f(\cdot)$ is denoted by $\vartheta(f,\epsilon).$ It is said that $f(\cdot)$ is almost periodic, a.p. for short, iff for each $\epsilon>0$ the set $\vartheta(f,\epsilon)$ is relatively dense in $I,$ which means that
there exists $l>0$ such that any subinterval of $I$ of length $l$ meets $\vartheta(f,\epsilon)$. The space consisted of all almost periodic functions from the interval $I$ into $X$ will be denoted by $AP(I:X).$

The notion of an asymptotically almost periodic function was introduced by Fr\' echet in 1941 (for further information concerning the vector-valued asymptotically almost periodic functions, see \cite{cheban}-\cite{diagana}, \cite{gaston} and references cited therein). A function $f \in C_{b}([0,\infty) : X)$ is called asymptotically almost periodic iff
for every $\epsilon >0$ we can find numbers $ l > 0$ and $M >0$ such that every subinterval of $[0,\infty)$ of
length $l$ contains, at least, one number $\tau$ such that $\|f(t+\tau)-f(t)\| \leq \epsilon$ for all $t \geq M.$
The space consisting of all asymptotically almost periodic functions from $[0,\infty)$ into $X$ is denoted by
$AAP([0,\infty) : X).$ For a function $f \in C([0,\infty):X),$ the following
statements are equivalent (\cite{RUESS}):
\begin{itemize}
\item[(i)] $f\in AAP([0,\infty) :X).$
\item[(ii)] There exist uniquely determined functions $g \in AP([0,\infty) :X)$ and $\phi \in  C_{0}([0,\infty): X)$
such that $f = g+\phi.$
\item[(iii)] The set $H(f):=\{f(\cdot +s) : s\geq 0\}$ is relatively compact in $C_{b}([0,\infty):X).$
\end{itemize}

Assume that $1\leq p <\infty.$ Then it is said that a function $f\in L^{p}_{loc}(I :X)$ is Stepanov $p$-bounded, $S^{p}$-bounded shortly, iff
$$
\|f\|_{S^{p}}:=\sup_{t\in I}\Biggl( \int^{t+1}_{t}\|f(s)\|^{p}\, ds\Biggr)^{1/p}<\infty.
$$
The space $L_{S}^{p}(I:X)$ consisting of all $S^{p}$-bounded functions becomes a Banach space when equipped with the above norm. 
A function $f\in L_{S}^{p}(I:X)$ is said to be Stepanov $p$-almost periodic, $S^{p}$-almost periodic shortly, iff the function
$
\hat{f} : I \rightarrow L^{p}([0,1] :X),
$ defined by 
$$
\hat{f}(t)(s):=f(t+s),\quad t\in I,\ s\in [0,1]
$$
is almost periodic (cf. Amerio, Prouse \cite{amerio} for more details). 
We say that $f\in  L_{S}^{p}([0,\infty): X)$ is asymptotically Stepanov $p$-almost periodic, asymptotically $S^{p}$-almost periodic shortly, iff $\hat{f} : [0,\infty) \rightarrow L^{p}([0,1]:X)$ is asymptotically almost periodic. We use the shorthands $APS^{p}([0,\infty) : X)$ and $AAPS^{p}([0,\infty) : X)$ to denote the vector spaces consisting of all Stepanov $p$-almost periodic functions and asymptotically Stepanov $p$-almost periodic functions, respectively.
It is well-known that if $f(\cdot)$ is an almost periodic (respectively, a.a.p.) function
then $f(\cdot)$ is also $S^p$-almost periodic (resp., $S^p$-a.a.p.) for $1\leq p <\infty.$ The converse statement is not true, in general. 

Concerning almost periodic and asymptotically almost periodic solutions of various classes of abstract Volterra integro-differential equations in Banach spaces, the reader may consult \cite{amerio}, \cite{cheban}-\cite{diagana}, \cite{gaston}-\cite{hino-bor}, \cite{AOT}-\cite{relaxation-peng} and \cite{zhang-c-prim}.

Let $\gamma \in (0,1),$ and let ${\mathcal A}$ be a multivalued linear operator on a Banach space $X.$
Of importance is the following fractional relaxation inclusion
\[
\hbox{(DFP)}_{f,\gamma} : \left\{
\begin{array}{l}
{\mathbf D}_{t}^{\gamma}u(t)\in {\mathcal A}u(t)+f(t),\ t> 0,\\
\quad u(0)=x_{0},
\end{array}
\right.
\]
and its semilinear analogue
\[
\hbox{(DFP)}_{f,\gamma,s} : \left\{
\begin{array}{l}
{\mathbf D}_{t}^{\gamma}u(t)\in {\mathcal A}u(t)+f(t,u(t)),\ t> 0,\\
\quad u(0)=x_{0},
\end{array}
\right.
\]
where ${\mathbf D}_{t}^{\gamma}$ denotes the Caputo fractional derivative of order $\gamma,$ $x_{0}\in X$ and 
$f : [0,\infty) \rightarrow X,$ resp.
$f : [0,\infty) \times X \rightarrow X,$ is Stepanov almost periodic. 
The main aim of this paper is to continue our recent research studies \cite{AOT}-\cite{klavir} by investigating asymptotically almost periodic solutions
of fractional Cauchy inclusions (DFP)$_{f,\gamma}$ and (DFP)$_{f,\gamma,s}.$ We would like to note that the existence and uniqueness of (asymptotically) quasi-periodic solutions of fractional relaxation equations with Caputo derivatives have not received much attention so far: in the existing literature,
we have been able to find only one research paper (Li, Liang, Wang \cite{relaxation-liang}) concerning similar problematic; that paper is devoted to the study of $S$-asymptotically
$\omega$-periodic solutions to fractional relaxation equations with finite
delay. The established results of ours are completely new in degenerate case and, as already mentioned in the abstract, they seem to be new even for fractional relaxation equations with almost sectorial operators (\cite{pb1}). 

The paper is very simply organized and we deeply believe that
our readers will fairly quickly move throughout it.
We use the standard notation henceforth.
By $X$ and $Y$ we denote two Banach spaces over the field of complex numbers. 
The symbol
$L(X,Y)$ stands for the space consisting of all continuous linear mappings from $X$ into
$Y;$ $L(X)\equiv L(X,X).$ By $I$ we denote the identity operator on $X.$
Denote by $C_{b}([0,\infty):X),$ $C_{0}([0,\infty):X)$ and $BUC([0,\infty):X)$ the vector spaces consisted of all bounded continuous functions from $[0,\infty)$ into $X,$ all bounded continuous functions from $[0,\infty)$ into $X$ vanishing at infinity, and  all bounded uniformly continuous functions from $[0,\infty)$
into $X,$ respectively. Equipped with the usual sup-norm, any of these spaces becomes one of Banach's. 

Denote by $C_{0}([0,\infty) \times Y : X)$ the space of all
continuous functions $h : [0,\infty)  \times Y \rightarrow X$ satisfying that $\lim_{t\rightarrow \infty}h(t, y) = 0$ uniformly for $y$ in any compact subset of $Y .$  If $f : I  \times Y  \rightarrow X,$ then we define $\hat{f} : I  \times Y  \rightarrow L^{p}([0,1]:X)$ by $\hat{f}(t , y):=f(t +\cdot, y),$ $t\geq 0,$ $y\in Y.$

For the purpose of research of asymptotically almost periodic solutions of semilinear fractional Cauchy inclusions, we need to recall the following well-known definitions (see e.g. Zhang \cite{zhang-c-prim}, Long, Ding \cite{comp-adv}, and Kosti\' c \cite{hjms}):

\begin{defn}\label{definicija}
Let $1\leq p <\infty.$ 
\begin{itemize}
\item[(i)]
A function $f : I \times Y \rightarrow X$ is said to be almost periodic iff $f (\cdot, \cdot)$ is bounded, continuous as well as for every $\epsilon>0$ and every compact
$K\subseteq Y$ there exists $l(\epsilon,K) > 0$ such that every subinterval $J\subseteq I$ of length $l(\epsilon,K)$ contains a number $\tau$ with the property that $\|f (t +\tau , y)- f (t, y)\| \leq \epsilon$ for all $t \in  I,$ $ y \in K.$ The collection of such functions will be denoted by $AP(I \times Y : X).$
\item[(ii)] A function $f : [0,\infty)  \times Y \rightarrow X$ is said to be asymptotically almost periodic iff it is bounded continuous and admits a
decomposition $f = g + q,$ where $g \in AP({\mathbb R}  \times Y : X)$ and $q\in C_{0}([0,\infty)  \times Y : X).$ Denote by
 $AAP([0,\infty)  \times Y : X) $ the vector space consisting of all such functions.
\item[(iii)] A function $f : I \times Y \rightarrow X$ is called Stepanov $p$-almost periodic, $S^{p}$-almost periodic shortly, iff $\hat{f} : I  \times Y  \rightarrow L^{p}([0,1]:X)$ is almost periodic.
\item[(iv)] A function $f : [0,\infty)  \times Y \rightarrow X$
is said to be asymptotically $S^p$-almost periodic
iff $\hat{f}: [0,\infty)  \times Y \rightarrow  L^{p}([0,1]:X)$ is asymptotically almost periodic. The collection of such functions will be denoted by $AAPS^{p}([0,\infty) \times Y : X).$
\end{itemize}
\end{defn}

It could be of importance to remind ourselves of the following known facts (\cite{zhang-c-prim}): 

\begin{itemize}
\item[(i)]
Let $f \in AP(I \times Y : X)$ and $h \in AP(I : Y ).$ Then the mapping $t\mapsto f (t,h(t)),$ $t\in I$  belongs to the space $ AP(I:X).$ 
\item[(ii)] Let $f \in AAP([0,\infty) \times Y : X)$ and $h \in AAP([0,\infty) : Y ).$ Then the mapping $t\mapsto f (t,h(t)),$ $t\geq 0$ belongs to the space $AAP([0,\infty) : X).$
\end{itemize}

It can be easily proved that any asymptotically almost periodic two-parameter function is also asymptotically Stepanov $p$-almost periodic
($1\leq p <\infty$). More details about Stepanov $p$-almost periodic functions depending on two parameters can be found in \cite{hjms}.

Given $s\in {\mathbb R}$ in advance, set $\lfloor s \rfloor :=\sup \{
l\in {\mathbb Z} : s\geq l \}$ and $\lceil s \rceil:=\inf \{
l\in {\mathbb Z} : s\leq l \}.$ The Gamma function is denoted by
$\Gamma(\cdot)$ and the principal branch is always used to take the powers. Define $g_{\alpha}(t):=t^{\alpha -1}/\Gamma(\alpha),$ $t>0$ ($\alpha>0$).

Fractional calculus and fractional differential equations are rapidly growing fields of research (see e.g. \cite{bajlekova}, \cite{Diet}, \cite{knjigah}-\cite{knjigaho}, \cite{prus}, \cite{samko} and \cite{fractionalsectorial}).
Let $\gamma \in (0,1).$ Then the Wright function $\Phi_{\gamma}(\cdot)$ is 
defined by the formula
$$
\Phi_{\gamma}(z):=\sum \limits_{n=0}^{\infty}
\frac{(-z)^{n}}{n! \Gamma (1-\gamma -\gamma n)},\quad z\in {\mathbb C}.
$$
It is well known that $\Phi_{\gamma}(\cdot)$ is an entire function, as well as that $\Phi_{\gamma}(t)\geq 0,$ $t\geq 0,$
$\int^{\infty}_{0}t^{r}
\Phi_{\gamma}(t)\, dt=\frac{\Gamma(1+r)}{\Gamma(1+\gamma r)},$ $r>-1$ and $\int^{\infty}_{0}e^{-z t}\Phi_{\gamma}(t)\, dt=E_{\gamma}(-z),$ $z\in {\mathbb C},$
where $E_{\gamma}(\cdot)$ denotes the Mittag-Leffler function.
For more details about the Mittag-Leffler and Wright functions, we refer the reader to the doctoral dissertation of Bazhlekova \cite{bajlekova} and references cited therein.

Let $0<\tau \leq \infty,$ let $m\in {\mathbb N},$ and let
$I=(0,\tau).$ Then we can introduce the Sobolev space $W^{m,1}(I : X)$ in the following way (see e.g. \cite[p. 7]{bajlekova}):
\begin{align*}
W^{m,1}(I :X):=\Biggl \{ f \ | & \ \exists \varphi \in L^{1}(I : X)
\ \exists c_{k}\in {\mathbb C} \ (0\leq k\leq m-1) \\ &
f(t)=\sum_{k=0}^{m-1}c_{k}g_{k+1}(t)+\bigl(g_{m}\ast \varphi
\bigr)(t) \mbox{ for a.e. }t\in (0,\tau)\Biggr\}.
\end{align*}
If this is the case, we have $\varphi(t)=f^{(m)}(t)$ in distributional sense, and
$c_{k}=f^{(k)}(0)$ ($0\leq k\leq m-1$). 

\subsection{Multivalued linear operators in Banach spaces.}
The multivalued linear operators approach to abstract degenerate differential equations with integer order derivatives has been obeyed in the 
fundamental monograph \cite{faviniyagi} by Favini and Yagi (cf. also Kamenskii, Obukhovskii and Zecca \cite{kamenski} for a slightly different approach to
condensing multivalued mappings and semilinear
differential inclusions in Banach spaces).
In what follows, we will present a brief overview of definitions from the theory of multivalued linear operators in Banach spaces. 

A multivalued map (multimap) ${\mathcal A} : X \rightarrow P(Y)$ is said to be a multivalued
linear operator (MLO) iff the following holds:
\begin{itemize}
\item[(i)] $D({\mathcal A}) := \{x \in X : {\mathcal A}x \neq \emptyset\}$ is a linear subspace of $X$;
\item[(ii)] ${\mathcal A}x +{\mathcal A}y \subseteq {\mathcal A}(x + y),$ $x,\ y \in D({\mathcal A})$
and $\lambda {\mathcal A}x \subseteq {\mathcal A}(\lambda x),$ $\lambda \in {\mathbb C},$ $x \in D({\mathcal A}).$
\end{itemize}
If $X=Y,$ then we say that ${\mathcal A}$ is an MLO in $X.$

It is well known that, if $x,\ y\in D({\mathcal A})$ and $\lambda,\ \eta \in {\mathbb C}$ with $|\lambda| + |\eta| \neq 0,$ then 
$\lambda {\mathcal A}x + \eta {\mathcal A}y = {\mathcal A}(\lambda x + \eta y).$ Assuming ${\mathcal A}$ is an MLO, we have that ${\mathcal A}0$ is a linear subspace of $Y$
and ${\mathcal A}x = f + {\mathcal A}0$ for any $x \in D({\mathcal A})$ and $f \in {\mathcal A}x.$ Define $R({\mathcal A}):=\{{\mathcal A}x :  x\in D({\mathcal A})\}.$
Then the set ${\mathcal A}^{-1}0 = \{x \in D({\mathcal A}) : 0 \in {\mathcal A}x\}$ is called the kernel
of ${\mathcal A}$ and it is denoted by $N({\mathcal A}).$ The inverse ${\mathcal A}^{-1}$ of an MLO is defined by
$D({\mathcal A}^{-1}) := R({\mathcal A})$ and ${\mathcal A}^{-1} y := \{x \in D({\mathcal A}) : y \in {\mathcal A}x\}$.
It can be simply shown that ${\mathcal A}^{-1}$ is an MLO in $X,$ as well as that $N({\mathcal A}^{-1}) = {\mathcal A}0$
and $({\mathcal A}^{-1})^{-1}={\mathcal A};$ ${\mathcal A}$ is said to be injective iff ${\mathcal A}^{-1}$ is
single-valued. 

Let ${\mathcal A},\ {\mathcal B} : X \rightarrow P(Y)$ be two MLOs. Then we define its sum ${\mathcal A}+{\mathcal B}$ by $D({\mathcal A}+{\mathcal B}) := D({\mathcal A})\cap D({\mathcal B})$ and $({\mathcal A}+{\mathcal B})x := {\mathcal A}x +{\mathcal B}x,$ $x\in D({\mathcal A}+{\mathcal B}).$ Clearly, ${\mathcal A}+{\mathcal B}$ is likewise an MLO.

Assume that ${\mathcal A} : X \rightarrow P(Y)$ and ${\mathcal B} : Y\rightarrow P(Z)$ are two MLOs, where $Z$ is likewise a complex Banach space. The product of operators ${\mathcal A}$
and ${\mathcal B}$ is defined by $D({\mathcal B}{\mathcal A}) :=\{x \in D({\mathcal A}) : D({\mathcal B})\cap {\mathcal A}x \neq \emptyset\}$ and
${\mathcal B}{\mathcal A}x:=
{\mathcal B}(D({\mathcal B})\cap {\mathcal A}x).$ It is well known that ${\mathcal B}{\mathcal A} : X\rightarrow P(Z)$ is an MLO and
$({\mathcal B}{\mathcal A})^{-1} = {\mathcal A}^{-1}{\mathcal B}^{-1}.$ 

It is said that an MLO operator  ${\mathcal A} : X\rightarrow P(Y)$ is closed if for any
sequences $(x_{n})$ in $D({\mathcal A})$ and $(y_{n})$ in $Y$ such that $y_{n}\in {\mathcal A}x_{n}$ for all $n\in {\mathbb N}$ we have that the suppositions $\lim_{n \rightarrow \infty}x_{n}=x$ and
$\lim_{n \rightarrow \infty}y_{n}=y$ imply
$x\in D({\mathcal A})$ and $y\in {\mathcal A}x.$

We need the following auxiliary lemma from \cite{FKP}.

\begin{lem}\label{integracija-tricky}
Let $\Omega$ be a locally compact, separable metric space,
and let $\mu$ be a locally finite
Borel measure defined on $\Omega.$
Suppose that ${\mathcal A} : X\rightarrow P(Y)$ is a closed \emph{MLO}. Let $f : \Omega \rightarrow X$ and $g : \Omega \rightarrow Y$ be $\mu$-integrable, and let $g(x)\in {\mathcal A}f(x),$ $x\in \Omega.$ Then $\int_{\Omega}f\, d\mu \in D({\mathcal A})$ and $\int_{\Omega}g\, d\mu\in {\mathcal A}\int_{\Omega}f\, d\mu.$
\end{lem}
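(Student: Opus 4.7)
The plan is to reduce the claim to the classical fact that the Bochner integral of a function valued in a closed linear subspace of a Banach space again lies in that subspace, applied to the graph of ${\mathcal A}$.

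First, I would equip the product space $X\times Y$ with the norm $\|(x,y)\|:=\|x\|+\|y\|$ (any equivalent product norm would do), making it a Banach space, and introduce the graph
$$
G({\mathcal A}):=\bigl\{(x,y)\in X\times Y : y\in {\mathcal A}x\bigr\}.
$$
The MLO axioms (i)--(ii) give directly that $G({\mathcal A})$ is a linear subspace of $X\times Y$: if $(x_1,y_1),(x_2,y_2)\in G({\mathcal A})$ and $\lambda,\eta\in{\mathbb C}$, then $\lambda y_1+\eta y_2\in \lambda{\mathcal A}x_1+\eta{\mathcal A}y_2\subseteq {\mathcal A}(\lambda x_1+\eta x_2)$. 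The hypothesis that ${\mathcal A}$ is closed is equivalent to $G({\mathcal A})$ being norm-closed in $X\times Y$: indeed, if $(x_n,y_n)\in G({\mathcal A})$ converges to $(x,y)$ in $X\times Y$, then $x_n\to x$, $y_n\to y$ and $y_n\in {\mathcal A}x_n$, so by closedness of ${\mathcal A}$ we have $y\in {\mathcal A}x$, i.e. $(x,y)\in G({\mathcal A})$.

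Next, I would consider $h:\Omega\to X\times Y$ defined by $h(x):=(f(x),g(x))$. Since $f$ and $g$ are $\mu$-integrable (Bochner), $h$ is strongly measurable and $\|h(\cdot)\|=\|f(\cdot)\|+\|g(\cdot)\|\in L^1(\mu)$, hence $h$ is $\mu$-integrable into $X\times Y$, with
$$
\int_{\Omega}h\,d\mu=\Biggl(\int_{\Omega}f\,d\mu,\ \int_{\Omega}g\,d\mu\Biggr).
$$
By the standing hypothesis, $h(x)\in G({\mathcal A})$ for every $x\in\Omega$.

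Finally, I would invoke the standard Hahn--Banach argument: for any continuous linear functional $\Phi\in (X\times Y)^{\ast}$ vanishing on the closed subspace $G({\mathcal A})$ we have $\Phi(h(x))=0$ for every $x\in\Omega$, whence
$$
\Phi\!\left(\int_{\Omega}h\,d\mu\right)=\int_{\Omega}\Phi(h(x))\,d\mu(x)=0.
$$
Since $G({\mathcal A})$ is closed and this holds for all such $\Phi$, $\int_{\Omega}h\,d\mu\in G({\mathcal A})$, which unwraps precisely to $\int_{\Omega}f\,d\mu\in D({\mathcal A})$ and $\int_{\Omega}g\,d\mu\in {\mathcal A}\int_{\Omega}f\,d\mu$. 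The only point requiring any care is the passage between closedness of ${\mathcal A}$ as an MLO and closedness of $G({\mathcal A})$ as a subspace of $X\times Y$, which is immediate from the sequential definition; everything else is the standard measure-theoretic machinery for the Bochner integral on a locally compact, separable metric space with a locally finite Borel measure.
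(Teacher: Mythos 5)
Your proof is correct. Note first that this paper does not actually prove the lemma: it is imported verbatim from \cite{FKP} (the author's monograph manuscript), so there is no in-paper argument to compare against; your write-up supplies exactly the standard proof one would expect to find in the cited source. The reduction to the graph $G({\mathcal A})\subseteq X\times Y$ is the right move: the MLO axioms make $G({\mathcal A})$ a linear subspace, the sequential definition of closedness of ${\mathcal A}$ used in this paper is precisely norm-closedness of $G({\mathcal A})$ (sequences suffice in a normed space), and the Hahn--Banach step --- if $\int_{\Omega}h\,d\mu$ lay outside the closed subspace $G({\mathcal A})$, some $\Phi \in (X\times Y)^{\ast}$ would vanish on $G({\mathcal A})$ but not on the integral, contradicting $\Phi\bigl(\int_{\Omega}h\,d\mu\bigr)=\int_{\Omega}\Phi(h)\,d\mu=0$ --- is the classical Mazur-type corollary for Bochner integrals. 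Two cosmetic remarks: in your verification that $G({\mathcal A})$ is a subspace there is a typo (${\mathcal A}y_{2}$ should read ${\mathcal A}x_{2}$, and the containment should be stated for $\lambda x_{1}+\eta x_{2}\in D({\mathcal A})$); and your closing observation is accurate --- the topological hypotheses on $\Omega$ and $\mu$ play no role in the argument beyond fixing the Bochner integration framework, so your proof in fact establishes the statement for an arbitrary measure space, a mild generalization the paper does not need but loses nothing by.
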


Let ${\mathcal A}$ be an MLO in $X,$ let $C\in L(X)$ be injective, and let $C{\mathcal A}\subseteq {\mathcal A}C.$ 
Then
the $C$-resolvent set of ${\mathcal A},$ $\rho_{C}({\mathcal A})$ for short, is defined as the union of those complex numbers
$\lambda \in {\mathbb C}$ for which
\begin{itemize}
\item[(i)] $R(C)\subseteq R(\lambda-{\mathcal A})$;
\item[(ii)] $(\lambda - {\mathcal A})^{-1}C$ is a single-valued linear continuous operator on $X.$
\end{itemize}
The operator $\lambda \mapsto (\lambda -{\mathcal A})^{-1}C$ is called the $C$-resolvent of ${\mathcal A}$ ($\lambda \in \rho_{C}({\mathcal A})$); the resolvent set of ${\mathcal A}$ is defined by $\rho({\mathcal A}):=\rho_{I}({\mathcal A}),$ $R(\lambda : {\mathcal A})\equiv  (\lambda -{\mathcal A})^{-1}$  ($\lambda \in \rho({\mathcal A})$). 
The basic properties of $C$-resolvents of single-valued linear operators continue to hold in the multivalued linear setting  (\cite{faviniyagi}, \cite{FKP}).

Concerning the abstract degenerate Volterra integro-differential equations and abstract degenerate fractional differential equations, the reader may consult author's forthcoming monograph \cite{FKP}.

Now we will recall the basic things about fractional powers and interpolation spaces of multivalued linear operators.
Assume that $X$ is a Banach space, $(-\infty,0]\subseteq \rho({\mathcal A})$ as well as there exist finite numbers $M\geq 1$ and $\beta \in (0,1]$ such that
$
\| R(\lambda : {\mathcal A}) \| \leq M(1+|\lambda|)^{-\beta},$ $ \lambda \leq 0.
$
Then it is not difficult to see that there exist two positive real constants
$c>0$ and $M_{1}>0$
such that
$\rho({\mathcal A})$ contains an open region $\Omega =\{
\lambda \in {\mathbb C} : |\Im \lambda| \leq  (2M_{1})^{-1} (c-\Re \lambda)^{\beta},\ \Re \lambda \leq c\}$ of complex plane around the half-line $(-\infty,0],$ where we have the estimate
$
\| R(\lambda : {\mathcal A}) \| =O((1+|\lambda|)^{-\beta}),$ $ \lambda \in \Omega .$ Designate by $\Gamma '$ the upwards oriented curve
$\{ \xi \pm  i(2M_{1})^{-1}(c-\xi)^{\beta} : -\infty <\xi \leq c \}.$
Following Favini and Yagi \cite{faviniyagi}, we define the fractional power
$$
{\mathcal A}^{-\theta}:=\frac{1}{2\pi i}\int_{\Gamma'}\lambda^{-\theta} \bigl( \lambda-{\mathcal A} \bigr)^{-1}\, d\lambda \in L(X)
$$
for $\theta >1-\beta$. Set ${\mathcal A}^{\theta}:=({\mathcal A}^{-\theta})^{-1}$ ($\theta >1-\beta$). Then the semigroup properties ${\mathcal A}^{-\theta_{1}}{\mathcal A}^{-\theta_{2}}={\mathcal A}^{-(\theta_{1}+\theta_{2})}$ and ${\mathcal A}^{\theta_{1}}{\mathcal A}^{\theta_{2}}={\mathcal A}^{\theta_{1}+\theta_{2}}$
hold for $\theta_{1},\ \theta_{2}>1-\beta$ (recall that the fractional power ${\mathcal A}^{\theta}$ need not be injective and the meaning of ${\mathcal A}^{\theta}$ is understood in the MLO sense for $\theta >1-\beta$).

We topologize the vector space $D({\mathcal A})$ by the norm
$
\|\cdot\|_{[D({\mathcal A})]}:=\inf_{y\in {\mathcal A} \cdot}\|y\|.
$
Then $(D({\mathcal A}),\|\cdot \|_{[D({\mathcal A})]})$ is a Banach space and the norm  $
\|\cdot\|_{[D({\mathcal A})]}$ is equivalent with the following one $\|\cdot\|+
\|\cdot\|_{[D({\mathcal A})]};$ $(D({\mathcal A}^{\theta}),\|\cdot \|_{[D({\mathcal A}^{\theta})]})$ is likewise a Banach space and we have the equivalence of
norms $
\|\cdot\|_{[D({\mathcal A}^{\theta})]}$ and $\|\cdot\|+
\|\cdot\|_{[D({\mathcal A}^{\theta})]}$ for $\theta > 1-\beta$ (cf. the proof of \cite[Proposition 1.1]{faviniyagi}).

Let $\theta \in (0,1),$ let
$$
X_{{\mathcal A}}^{\theta}:=\Biggl\{  x\in X : \sup_{\xi >0}\xi^{\theta}\Bigl\| \xi \bigl(\xi +{\mathcal A}\bigr)^{-1}x- x\Bigr\|<\infty \Biggr\},
$$
and let
$$
\bigl\| \cdot \bigr\|_{X_{{\mathcal A}}^{\theta}}:=\|\cdot\|+\sup_{\xi >0}\xi^{\theta}\Bigl \| \xi \bigl(\xi +{\mathcal A}\Bigr)^{-1}\cdot -\cdot  \Bigr\|.
$$
Then it is well known that $
(X_{{\mathcal A}}^{\theta},\| \cdot \|_{X_{{\mathcal A}}^{\theta}})$ is a Banach space as well as that $X_{{\mathcal A}}^{\theta}$ continuously embedded in $
X.$

We refer the reader to \cite{tsk}-\cite{faviniyagi} and \cite{FKP} for further information concerning interpolation spaces and fractional powers of multivalued linear operators. 

\section{Asymptotically almost periodic solutions of abstract fractional Cauchy inclusions (DFP)$_{f,\gamma}$ and (DFP)$_{f,\gamma,s}$}\label{idiot}

In order to formulate our main results, we need to recall two important lemmae regarding the composition principles for asymptotically Stepanov almost periodic two-parameter functions (\cite{hjms}). 

\begin{lem}\label{bibl}
Let $I =[0,\infty).$
Suppose that the following conditions hold:
\begin{itemize}
\item[(i)] $g \in APS^{p}(I \times X : X)  $ with  $p > 1, $ and there exist a number  $ r\geq \max (p, p/p -1)$ and a function $ L_{g}\in L_{S}^{r}(I:X) $ such that
\begin{align}\label{vbnmp}
\| g(t,x)-g(t,y)\| \leq L_{g}(t)\|x-y\|,\quad t\geq 0,\ x,\ y\in X.
\end{align}
\item[(ii)] $y \in APS^{p}(I:X),$ and there exists a Lebesgue's measurable set $E \subseteq I$ with $m (E)= 0$ such that
$ K =\{y(t) : t \in I \setminus E\}$
is relatively compact in X.
\item[(iii)] $f(t,x)=g(t,x)+q(t,x)$ for all $t\geq 0$ and $x\in X,$ where $\hat{q}\in C_{0}([0,\infty) \times X : L^{q}([0,1]:X))$
and $q:=pr/p+r.$
\item[(iv)] $x(t)=y(t)+z(t) $ for all $t\geq 0,$ where $\hat{z}\in C_{0}([0,\infty) : L^{p}([0,1]:X)).$
\item[(v)]  There exists a Lebesgue's measurable set $E' \subseteq I$ with $m (E')= 0$ such that
$ K' =\{x(t) : t \in I \setminus E'\}$
is relatively compact in $ X.$
\end{itemize}
Then $q\in [1, p)$ and $f(\cdot, x(\cdot)) \in AAPS^{q}(I : X).$
\end{lem}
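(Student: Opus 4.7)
The plan is to decompose $f(\cdot,x(\cdot))$ into a Stepanov almost periodic piece plus two pieces whose ``hats'' lie in $C_{0}([0,\infty):L^{q}([0,1]:X))$, and then read off the conclusion from Definition~\ref{definicija}(iv). Preliminarily, I would dispose of the membership $q\in[1,p)$ by elementary algebra: $q=pr/(p+r)<p$ is automatic, while $q\ge 1$ is equivalent to $r(p-1)\ge p$, i.e. to the hypothesis $r\ge p/(p-1)$ from (i). Then I would write
\[
f(t,x(t))=g(t,y(t))+\bigl[g(t,x(t))-g(t,y(t))\bigr]+q(t,x(t)),\qquad t\ge 0,
\]
so that the task reduces to analyzing each of the three summands separately in the $S^{q}$-sense.

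For the first summand, the idea is to invoke the Stepanov composition principle for two-parameter almost periodic functions established in \cite{hjms}: since $g\in APS^{p}(I\times X:X)$ is Lipschitz in the second variable with $L_{g}\in L_{S}^{r}(I:X)$, $y\in APS^{p}(I:X)$, and the range of $y$ is relatively compact off a null set (hypothesis (ii)), that composition principle delivers $g(\cdot,y(\cdot))\in APS^{q}(I:X)$, with $q=pr/(p+r)$ exactly as in the statement (this is the exponent forced by Hölder in the proof of the principle).

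For the second summand, I would use (\ref{vbnmp}) pointwise: $\|g(s,x(s))-g(s,y(s))\|\le L_{g}(s)\,\|z(s)\|$ for a.e.\ $s\ge 0$. Raising to the $q$-th power, integrating over $[t,t+1]$, and applying Hölder with the conjugate pair $(r/q,\,p/q)$ (which are legal exponents precisely because $1/q=1/r+1/p$) gives
\[
\int_{t}^{t+1}\bigl\|g(s,x(s))-g(s,y(s))\bigr\|^{q}\,ds\le \|L_{g}\|_{S^{r}}^{q}\Biggl(\int_{t}^{t+1}\|z(s)\|^{p}\,ds\Biggr)^{q/p}.
\]
Since $\hat{z}\in C_{0}([0,\infty):L^{p}([0,1]:X))$ by (iv), the right-hand side tends to $0$ as $t\to\infty$, so the hat of this difference belongs to $C_{0}([0,\infty):L^{q}([0,1]:X))$.

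For the third summand, I would exploit hypothesis (v): $K'=\{x(t):t\in I\setminus E'\}$ is relatively compact, hence $\overline{K'}$ is a compact subset of $X$ on which, by (iii), $\hat{q}(t,y)\to 0$ in $L^{q}([0,1]:X)$ uniformly in $y\in\overline{K'}$ as $t\to\infty$; evaluating at $y=x(t)$ and controlling measurability through continuity of $q(t,\cdot)$ yields $\widehat{q(\cdot,x(\cdot))}\in C_{0}([0,\infty):L^{q}([0,1]:X))$. Combining the three contributions, $f(\cdot,x(\cdot))$ is the sum of an $APS^{q}$-function and a term whose hat is in $C_{0}([0,\infty):L^{q})$, which is the definition of $AAPS^{q}(I:X)$. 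The main obstacle, and the step I would most carefully check, is the justification of the composition principle in the first summand, since it is precisely this step that fixes the exponent $q$; the other two steps are essentially applications of Hölder together with the uniform-on-compacta vanishing of $\hat{q}$.
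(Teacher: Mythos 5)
The paper itself offers no proof of Lemma \ref{bibl}: it is recalled verbatim from \cite{hjms}, so there is nothing to compare line by line, and your proposal has to be judged against the standard argument (essentially Long--Ding \cite{comp-adv}) that the cited source follows. Your skeleton is exactly that argument, and two and a half of your steps are sound: the algebra giving $q\in[1,p)$ (with $q=pr/(p+r)<p$ automatic and $q\geq 1$ equivalent to $r\geq p/(p-1)$) is correct; the decomposition $f(t,x(t))=g(t,y(t))+[g(t,x(t))-g(t,y(t))]+q(t,x(t))$ is the right one; the middle term is handled correctly by H\"older with the conjugate pair $(r/q,p/q)$, legitimate because $1/q=1/r+1/p$; and outsourcing the first term to the two-parameter $APS^{p}$ composition principle is exactly what the source does --- ironically, this step, which you flag as the one to check most carefully, is the safe one, since it is a genuinely separate known theorem rather than a relative of the statement being proved.

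The genuine gap is in your third summand. ``Evaluating at $y=x(t)$'' produces $\hat{q}(t,x(t))(s)=q(t+s,x(t))$, whereas what must tend to zero is $\widehat{q(\cdot,x(\cdot))}(t)(s)=q(t+s,x(t+s))$, with the second argument moving along the trajectory inside the $s$-integral. Hypothesis (iii) plus compactness of $\overline{K'}$ gives only $\lim_{t\to\infty}\sup_{y\in \overline{K'}}\int_{0}^{1}\|q(t+s,y)\|^{q}\,ds=0$ for \emph{fixed} $y$, and this does not bound the composed integral: one can construct a jointly continuous $q$ concentrated on a shrinking moving neighbourhood of a rapidly oscillating trajectory $x(\cdot)$ with precompact range, so that $\hat{q}(t,y)\to 0$ in $L^{q}([0,1]:X)$ uniformly in $y$ while $q(s,x(s))\equiv 1$. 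The standard repair, used in \cite{comp-adv}, is to approximate $x$ on $[t,t+1]$ by a simple function taking values in a finite $\delta$-net $\{x_{1},\dots,x_{m}\}$ of $\overline{K'}$ (available by (v)), which reduces the main part to the finitely many fixed arguments $x_{i}$, and then to control the error $\int_{t}^{t+1}\|q(s,x(s))-q(s,x_{i})\|^{q}\,ds$ by a Stepanov--Lipschitz estimate for $q=f-g$. But \eqref{vbnmp} supplies such an estimate only for the $g$-part; in the source literature the Lipschitz condition is imposed on $f$ itself (as it effectively is in the applications here, cf. condition (iv) of Theorem \ref{stepa-vuk}), whence $q$ inherits it and the error term closes as $O(\delta^{q})$. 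As written, your step for $q(t,x(t))$ does not follow from (iii) and (v) alone, and you need either this Lipschitz bridge or some other equicontinuity-in-$y$ mechanism along the trajectory to complete the proof.
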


In the case of consideration of usual Lipschitz type condition 
\begin{align}\label{vbnmp-frim}
\| f(t,x)-f(t,y)\| \leq L\|x-y\|,\quad t\geq 0,\ x,\ y\in X,
\end{align}
the following result holds true:

\begin{lem}\label{bibl-frimaonji}
Let $I =[0,\infty).$
Suppose that the following conditions hold:
\begin{itemize}
\item[(i)] $g \in APS^{p}(I \times X : X)  $ with  $p \geq 1, $ and there exists a constant $L>0$ such that \eqref{vbnmp-frim} holds with the function $f(\cdot, \cdot)  $ replaced by the function $g(\cdot, \cdot)  $ therein.
\item[(ii)] $y \in APS^{p}(I:X),$ and there exists a Lebesgue's measurable set $E \subseteq I$ with $m (E)= 0$ such that
$ K =\{y(t) : t \in I \setminus E\}$
is relatively compact in X.
\item[(iii)] $f(t,x)=g(t,x)+q(t,x)$ for all $t\geq 0$ and $x\in X,$ where $\hat{q}\in C_{0}([0,\infty) \times X : L^{p}([0,1]:X)).$
\item[(iv)] $x(t)=y(t)+z(t) $ for all $t\geq 0,$ where $\hat{z}\in C_{0}([0,\infty) : L^{p}([0,1]:X)).$
\item[(v)] There exists a Lebesgue's measurable set $E' \subseteq I$ with $m (E')= 0$ such that
$ K' =\{x(t) : t \in I \setminus E'\}$
is relatively compact in $ X.$
\end{itemize}
Then $f(\cdot, x(\cdot)) \in AAPS^{p}(I : X).$
\end{lem}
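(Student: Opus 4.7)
The plan is to split $f(\cdot,x(\cdot))$ into three pieces by combining the decompositions in (iii) and (iv):
\begin{align*}
f(t,x(t)) = g(t,y(t)) + \bigl[g(t,x(t))-g(t,y(t))\bigr] + q(t,x(t)),\qquad t\geq 0,
\end{align*}
and to show that $g(\cdot,y(\cdot))\in APS^{p}(I:X)$ while the Bochner transforms of the other two summands lie in $C_{0}([0,\infty):L^{p}([0,1]:X))$. Summing, $\widehat{f(\cdot,x(\cdot))}$ then decomposes as an almost periodic function in $L^{p}([0,1]:X)$ plus a $C_{0}$-remainder, which is the characterization of $f(\cdot,x(\cdot))\in AAPS^{p}(I:X)$.

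For the first summand I would invoke the one-parameter Bochner-type composition principle for $S^{p}$-almost periodic functions: the Lipschitz hypothesis \eqref{vbnmp-frim} on $g$, the relative compactness of the set $K=\{y(s):s\in I\setminus E\}$ in (ii), and $S^{p}$-almost periodicity of $g$ and $y$ together produce $g(\cdot,y(\cdot))\in APS^{p}(I:X)$ via the standard translation-density argument (a common $\varepsilon$-period of $\hat{g}$ and $\hat{y}$ is converted through the Lipschitz constant into a $(1+L)\varepsilon$-period of $\widehat{g(\cdot,y(\cdot))}$, using uniform continuity of $\hat{g}(t,\cdot)$ in its spatial variable over the compact set $K$). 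The second summand is bounded directly:
\begin{align*}
\int_{t}^{t+1}\bigl\|g(s,x(s))-g(s,y(s))\bigr\|^{p}\,ds \;\leq\; L^{p}\int_{t}^{t+1}\|z(s)\|^{p}\,ds \;=\; L^{p}\bigl\|\hat{z}(t)\bigr\|_{L^{p}([0,1]:X)}^{p},
\end{align*}
which tends to $0$ as $t\to\infty$ by $\hat{z}\in C_{0}([0,\infty):L^{p}([0,1]:X))$, with continuity in $t$ inherited from that of $\hat{z}$.

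The delicate piece is the third summand. From $\hat{q}\in C_{0}([0,\infty)\times X:L^{p}([0,1]:X))$ one gets $\|\hat{q}(t,y)\|_{L^{p}}\to 0$ as $t\to\infty$ uniformly for $y\in\overline{K'}$, while joint continuity of $\hat{q}$ on the compact block $[0,T]\times\overline{K'}$ combined with this tail bound produces a uniform-in-$t$ spatial modulus of continuity, $\|y-y'\|<\delta\Rightarrow\|\hat{q}(t,y)-\hat{q}(t,y')\|_{L^{p}}<\varepsilon$ for all $t\geq 0$. Cover $\overline{K'}$ by balls of radius $\delta$ about a finite net $\{y_{1},\ldots,y_{N}\}$, approximate $x$ by a measurable simple function $\sigma$ taking values in this net with $\|x(s)-\sigma(s)\|<\delta$ a.e., and split $[t,t+1]$ into the level sets $A_{j}(t)=\{s\in[t,t+1]:\sigma(s)=y_{j}\}$, so that
\begin{align*}
\int_{t}^{t+1}\|q(s,x(s))\|^{p}\,ds \leq 2^{p-1}\sum_{j=1}^{N}\bigl\|\hat{q}(t,y_{j})\bigr\|_{L^{p}}^{p}+2^{p-1}\sum_{j=1}^{N}\int_{A_{j}(t)}\bigl\|q(s,x(s))-q(s,y_{j})\bigr\|^{p}\,ds.
\end{align*}
The first sum vanishes as $t\to\infty$ by the tail bound, and the second is absorbed by refining the net and invoking the uniform-in-$t$ modulus of continuity above.

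The main obstacle is precisely the control of the second sum: one has only the $L^{p}$-modulus of continuity of $\hat{q}$ in the spatial variable, whereas each slice $\int_{A_{j}(t)}\|q(s,x(s))-q(s,y_{j})\|^{p}\,ds$ requires bounds with $x(s)$ varying inside a small ball around $y_{j}$. Dispatching this via compactness of $\overline{K'}$ and a refinement of the $\delta$-net, and then assembling the three summands, yields an a.a.p. decomposition of $\widehat{f(\cdot,x(\cdot))}$ in $L^{p}([0,1]:X)$, i.e.\ $f(\cdot,x(\cdot))\in AAPS^{p}(I:X)$.
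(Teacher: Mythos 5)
Your three-term splitting
\begin{align*}
f(t,x(t)) = g(t,y(t)) + \bigl[g(t,x(t))-g(t,y(t))\bigr] + q(t,x(t))
\end{align*}
is the standard route for results of this type (note the paper itself gives no proof of this lemma; it is recalled from \cite{hjms}), and your first two summands are handled correctly: the Long--Ding-type composition principle gives $g(\cdot,y(\cdot))\in APS^{p}(I:X)$ (your net-based sketch is the right argument, though the bookkeeping yields $N\varepsilon+2L\delta$ rather than a clean $(1+L)\varepsilon$), and the bound $\int_{t}^{t+1}\|g(s,x(s))-g(s,y(s))\|^{p}\,ds\leq L^{p}\|\hat{z}(t)\|_{L^{p}}^{p}\to 0$ is immediate.

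The third summand, however, contains a genuine gap which you correctly name and then do not close. The hypothesis $\hat{q}\in C_{0}([0,\infty)\times X:L^{p}([0,1]:X))$ controls $\sup_{y\in\overline{K'}}\int_{t}^{t+1}\|q(s,y)\|^{p}\,ds$ --- supremum \emph{outside} the integral --- whereas $\int_{t}^{t+1}\|q(s,x(s))\|^{p}\,ds$ is only dominated by the integral of the pointwise supremum. Your proposed bridge (refine the $\delta$-net and invoke the uniform-in-$t$ modulus $\|y-y'\|<\delta\Rightarrow\|\hat{q}(t,y)-\hat{q}(t,y')\|_{L^{p}}<\varepsilon$, which is itself correctly derived) cannot work: that modulus again compares two \emph{frozen} spatial points and gives no pointwise-in-$s$ control of $\|q(s,x(s))-q(s,y_{j})\|$ on $A_{j}(t)$. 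Concretely, take $X={\mathbb R}$, $g\equiv 0$, $x(t)=y(t)=\sin t$, $z\equiv 0$, and
\begin{align*}
q(s,u):=c(s)\,\phi\bigl((u-\sin s)(1+s)\bigr),
\end{align*}
where $\phi$ is a continuous bump with $\phi(0)=1$ supported in $[-1,1]$ and $c$ is bounded continuous. For every $y$ the set $\{s\in[t,t+1]:|y-\sin s|\leq (1+s)^{-1}\}$ has measure $O((1+t)^{-1/2})$ uniformly in $y$, so $\hat{q}$ is jointly continuous and $\hat{q}(t,y)\to 0$ in $L^{p}$ uniformly on compacta (indeed on all of ${\mathbb R}$); yet $q(s,x(s))=c(s)$ for all $s$, so the second sum in your estimate stays bounded away from zero no matter how fine the net. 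Worse, choosing $c(s)=\sin\log(1+s)$ (bounded, uniformly continuous, not asymptotically almost periodic) makes $f(\cdot,x(\cdot))=c(\cdot)\notin AAPS^{p}(I:X)$ while all of (i)--(v) hold, which shows the obstacle you flagged is not a technical detail of your route: any complete proof must use the vanishing hypothesis in a strictly stronger form (e.g.\ with the supremum over $\overline{K'}$ taken inside the Stepanov integral, or under an equicontinuity assumption on the family $\{q(s,\cdot)\}_{s\geq 0}$ over $\overline{K'}$). As written, the proposal does not prove the lemma.
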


The following lemma can be proved in exactly the same way as \cite[Proposition 2.11]{EJDE}.

\begin{lem}\label{ravi-and}
Suppose that $1\leq p <\infty,$ $1/p +1/q=1$
and $(R(t))_{t> 0}\subseteq L(X,Y)$ is a strongly continuous operator family satisfying that $M:=\sum_{k=0}^{\infty}\|R(\cdot)\|_{L^{q}[k,k+1]}<\infty .$ If $f : {\mathbb R} \rightarrow X$ is $S^{p}$-almost periodic, then the function $G: {\mathbb R} \rightarrow Y,$ given by
\begin{align}\label{wer}
G(t):=\int^{t}_{-\infty}R(t-s)f(s)\, ds,\quad t\in {\mathbb R},
\end{align}
is well-defined and almost periodic.
\end{lem}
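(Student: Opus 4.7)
The plan is to follow the standard Stepanov-norm argument. First I would establish that $G(t)$ is well-defined and bounded by decomposing the tail $(-\infty,t]$ into unit intervals. Writing
\[
G(t)=\sum_{k=0}^{\infty}\int_{t-k-1}^{t-k}R(t-s)f(s)\,ds,
\]
and applying Hölder's inequality on each piece with the conjugate pair $(p,q)$, I get
\[
\|G(t)\|\leq \sum_{k=0}^{\infty}\|R(\cdot)\|_{L^{q}[k,k+1]}\Biggl(\int_{t-k-1}^{t-k}\|f(s)\|^{p}\,ds\Biggr)^{1/p}\leq M\,\|f\|_{S^{p}},
\]
which both justifies absolute convergence term-by-term and shows $\|G\|_{\infty}\leq M\|f\|_{S^{p}}$.

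Next I would verify continuity of $G$ on $\mathbb{R}$. Strong continuity of $R(\cdot)$ together with the same Hölder majorant and a dominated-convergence argument (the series being uniformly summable by the hypothesis on $M$) yields continuity of each partial sum, and then uniform convergence of the series on bounded time intervals gives continuity of $G$.

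The core step is checking almost periodicity. Fix $\epsilon>0$. Since $f$ is $S^{p}$-almost periodic, the set $\vartheta(\hat f,\epsilon)$ is relatively dense in $\mathbb{R}$; pick any $\tau$ in it, so that
\[
\Biggl(\int_{a}^{a+1}\|f(s+\tau)-f(s)\|^{p}\,ds\Biggr)^{1/p}\leq \epsilon\quad\text{for every }a\in\mathbb{R}.
\]
A change of variable in the translated integral defining $G(t+\tau)$ gives
\[
G(t+\tau)-G(t)=\int_{-\infty}^{t}R(t-s)\bigl[f(s+\tau)-f(s)\bigr]\,ds.
\]
Decomposing again over unit intervals and applying Hölder together with the above $S^{p}$-estimate (uniformly in the shift $a=t-k-1$) yields
\[
\|G(t+\tau)-G(t)\|\leq \sum_{k=0}^{\infty}\|R(\cdot)\|_{L^{q}[k,k+1]}\cdot\epsilon=M\epsilon,\quad t\in\mathbb{R}.
\]
Hence every $\tau\in\vartheta(\hat f,\epsilon)$ is an $M\epsilon$-period for $G$, so $\vartheta(G,M\epsilon)$ is relatively dense; combined with continuity, this proves $G\in AP(\mathbb{R}:Y)$.

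The only delicate point is the translation/change-of-variable step in the convolution integral, where one must make sure that replacing $s$ by $s-\tau$ in the integral running to $t+\tau$ converts the upper limit correctly to $t$ while preserving the kernel $R(t-s)$; once this is handled cleanly, all remaining estimates are essentially a single Hölder inequality summed against the hypothesis $M<\infty$, and no deeper property of the resolvent family $R(\cdot)$ is needed.
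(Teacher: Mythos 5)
Your proof is correct, but it is worth noting that the paper does not actually prove this lemma in the text: it only remarks that the statement ``can be proved in exactly the same way as \cite[Proposition 2.11]{EJDE}'', i.e., by the argument going back to Henr\'iquez \cite{hernan1}. That argument shares your unit-interval decomposition and H\"older estimate, but concludes differently: one writes $G(t)=\sum_{k=0}^{\infty}G_{k}(t)$ with $G_{k}(t):=\int_{k}^{k+1}R(u)f(t-u)\,du$, observes that each $G_{k}$ is almost periodic (since $t\mapsto f(t-\cdot)$ is an almost periodic $L^{p}$-valued function by $S^{p}$-almost periodicity of $f$, and $g\mapsto\int_{k}^{k+1}R(u)g(u)\,du$ is a bounded linear map into $Y$), and then uses the Weierstrass test, $\|G_{k}\|_{\infty}\leq\|R(\cdot)\|_{L^{q}[k,k+1]}\|f\|_{S^{p}},$ together with the fact that a uniform limit of almost periodic functions is almost periodic. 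You instead verify the Bohr property of $G$ directly: the substitution $s\mapsto s+\tau$ shows every $\epsilon$-period of $\hat{f}$ is an $M\epsilon$-period of $G$, uniformly in $t$. Your route is more elementary --- it avoids both the composition fact and the uniform-limit lemma --- at the price of having to establish continuity of $G$ by hand, which the series argument gets for free from uniform convergence of continuous summands. On that point your phrasing is slightly off: dominated convergence with the $t$-dependent kernel $R(t-s)$ is awkward (the natural dominating function varies with $t$), and strong continuity of $R(\cdot)$ is really only needed for measurability; the clean fix is the substitution you already use elsewhere, giving $\|G_{k}(t)-G_{k}(t')\|\leq\|R(\cdot)\|_{L^{q}[k,k+1]}\,\|f(t-\cdot)-f(t'-\cdot)\|_{L^{p}[k,k+1]},$ which tends to zero by continuity of $\hat{f}$ (equivalently, $L^{p}$-continuity of translations). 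Finally, to match the definition of almost periodicity you should run the argument with $\epsilon/M$ in place of $\epsilon$ (disposing of the trivial case $M=0$). Neither point is a genuine gap.
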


\begin{rem}\label{conclusion}
Let $p>1,$ and let $t\mapsto \|R(t)\|,$ $t\in (0,1]$ be an element of the space $L^{q}[0,1].$
Then
the inequality $\sum_{k=0}^{\infty}\|R(\cdot)\|_{L^{q}[k,k+1]}<\infty $ holds provided that there exists a finite number $\zeta <0$ such that $\|R(t)\| =O(t^{\zeta}),$ $t\rightarrow +\infty$ and
$\zeta <(1/p)-1.$
\end{rem}

\subsection{Subordinated fractional resolvent families with removable singularities at zero}\label{tarantino}

In this subsection, we analyze the class of multivalued linear operators ${\mathcal A}$ satisfying the condition \cite[(P), p. 47]{faviniyagi}:
\begin{itemize} \index{removable singularity at zero}
\item[(P)]
There exist finite constants $c,\ M>0$ and $\beta \in (0,1]$ such that\index{condition!(PW)}
$$
\Psi:=\Psi_{c}:=\Bigl\{ \lambda \in {\mathbb C} : \Re \lambda \geq -c\bigl( |\Im \lambda| +1 \bigr) \Bigr\} \subseteq \rho({\mathcal A})
$$
and
$$
\| R(\lambda : {\mathcal A})\| \leq M\bigl( 1+|\lambda|\bigr)^{-\beta},\quad \lambda \in \Psi.
$$
\end{itemize}

Suppose that the condition (P) holds.
Then degenerate strongly continuous semigroup $(T(t))_{t> 0}\subseteq L(X)$ generated by ${\mathcal A}$ satisfies estimate
$\|T(t) \| \leq M_{0} e^{-ct}t^{\beta -1},$ $t> 0$ for some finite constant $M_{0}>0$ (\cite{EJDE}). Furthermore, $(T(t))_{t> 0}$ is given by the formula
$$
T(t)x=\frac{1}{2\pi i}\int_{\Gamma}e^{\lambda t}\bigl(  \lambda -{\mathcal A} \bigr)^{-1}x\, d\lambda,\quad t>0,\ x\in X,
$$
where $\Gamma$ is the upwards oriented curve $\lambda=-c(|\eta|+1)+i\eta$ ($\eta \in {\mathbb R}$).
Let $0<\gamma <1,$ and let $\nu >-\beta .$ Set
\begin{align*}
T_{\gamma,\nu}(t)x:=t^{\gamma \nu}\int^{\infty}_{0}s^{\nu}\Phi_{\gamma}( s)T\bigl( st^{\gamma}\bigr)x\, ds,\quad t>0,\ x\in X.
\end{align*}
Following Bazhlekova \cite{bajlekova} and Wang, Chen, Xiao \cite{fractionalsectorial}, we set
$$
S_{\gamma}(t):=T_{\gamma,0}(t)\mbox{ and }P_{\gamma}(t):=\gamma T_{\gamma,1}(t)/t^{\gamma},\quad t>0.
$$
Recall that $(S_{\gamma}(t))_{t>0}$ is a subordinated $(g_{\gamma},I)$-regularized resolvent family generated by ${\mathcal A},$ which is not necessarily strongly continuous at zero. 
In \cite{FKP}, we have proved that there exists a finite constant $M_{1}>0$ such that
\begin{align}\label{debil}
\bigl\| S_{\gamma}(t) \bigr\|+\bigl\| P_{\gamma}(t) \bigr\|\leq M_{1}t^{\gamma (\beta-1)},\quad t>0.
\end{align}

For our later purposes, we need to improve the growth order of these operator families at infinity:

\begin{lem}\label{nmb}
There exists a finite constant $M_{2}>0$ such that
\begin{align}\label{debil-prim}
\bigl\| S_{\gamma}(t) \bigr\|\leq M_{2}t^{-\gamma},\ t\geq 1 \ \ \mbox{  and  }\ \ \bigl\| P_{\gamma}(t) \bigr\|\leq M_{2}t^{-2\gamma },\ t\geq 1.
\end{align}
\end{lem}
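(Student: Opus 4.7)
The plan is to insert the pointwise semigroup estimate $\|T(u)\|\le M_{0}e^{-cu}u^{\beta-1}$ (valid for $u>0$) directly under the integral signs defining $S_{\gamma}(t)$ and $P_{\gamma}(t)$, and to extract the decay in $t$ by a change of variables $r=st^{\gamma}$. Concretely, using the definition $S_{\gamma}(t)=T_{\gamma,0}(t)=\int_{0}^{\infty}\Phi_{\gamma}(s)T(st^{\gamma})\,ds$ and the exponential bound for $T(\cdot)$, the triangle inequality gives
\[
\bigl\| S_{\gamma}(t)\bigr\|\le M_{0}\int_{0}^{\infty}\Phi_{\gamma}(s)\,e^{-cst^{\gamma}}\bigl(st^{\gamma}\bigr)^{\beta-1}\,ds
=M_{0}t^{-\gamma}\int_{0}^{\infty}\Phi_{\gamma}\bigl(r/t^{\gamma}\bigr)\,e^{-cr}r^{\beta-1}\,dr,
\]
after the substitution $r=st^{\gamma}$. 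Analogously, because $P_{\gamma}(t)=\gamma \int_{0}^{\infty}s\Phi_{\gamma}(s)T(st^{\gamma})\,ds$, the same change of variables yields
\[
\bigl\| P_{\gamma}(t)\bigr\|\le \gamma M_{0}t^{-2\gamma}\int_{0}^{\infty}\Phi_{\gamma}\bigl(r/t^{\gamma}\bigr)\,e^{-cr}r^{\beta}\,dr,
\]
so the prefactors $t^{-\gamma}$ and $t^{-2\gamma}$ are already visible and it only remains to bound the two residual integrals by a constant uniformly in $t\ge 1$.

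The key auxiliary fact I would invoke is that $\Phi_{\gamma}$ is bounded on $[0,\infty)$. Indeed, $\Phi_{\gamma}$ is entire (hence continuous), non-negative on the positive real axis, and enjoys super-exponential decay as $z\to+\infty$ (this follows from its standard Wright-function asymptotics, or from the finiteness of all positive moments $\int_{0}^{\infty}t^{r}\Phi_{\gamma}(t)\,dt=\Gamma(1+r)/\Gamma(1+\gamma r)$ recorded earlier). Setting $M_{\Phi}:=\sup_{u\ge 0}\Phi_{\gamma}(u)<\infty$, one obtains
\[
\int_{0}^{\infty}\Phi_{\gamma}\bigl(r/t^{\gamma}\bigr)e^{-cr}r^{\beta-1}\,dr\le M_{\Phi}\,c^{-\beta}\Gamma(\beta)\qquad \text{and}\qquad \int_{0}^{\infty}\Phi_{\gamma}\bigl(r/t^{\gamma}\bigr)e^{-cr}r^{\beta}\,dr\le M_{\Phi}\,c^{-\beta-1}\Gamma(\beta+1),
\]
independently of $t\ge 1$, which yields \eqref{debil-prim} with $M_{2}:=\max\bigl\{ M_{0}M_{\Phi}c^{-\beta}\Gamma(\beta),\ \gamma M_{0}M_{\Phi}c^{-\beta-1}\Gamma(\beta+1)\bigr\}$.

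I expect the main obstacle to be a purely bookkeeping one: namely, justifying (or simply citing) the boundedness of $\Phi_{\gamma}$ on $[0,\infty)$ in a self-contained way, since the excerpt records only its non-negativity and its moments, not the asymptotic decay at infinity. Apart from this, the argument is essentially a short change of variables; there are no delicate contour deformations and no use of the Mittag-Leffler asymptotics, although one could alternatively derive the same bounds by representing $S_{\gamma}(t)x=(2\pi i)^{-1}\int_{\Gamma}E_{\gamma}(\lambda t^{\gamma})R(\lambda:{\mathcal A})x\,d\lambda$ together with the known $O(z^{-1})$ decay of $E_{\gamma}(-z)$, which would give an independent cross-check of the answer.
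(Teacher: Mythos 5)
Your proof is correct, and after the common first step it takes a genuinely different route from the paper's. Both arguments begin by inserting the semigroup bound $\|T(u)\|\leq M_{0}e^{-cu}u^{\beta-1}$, $u>0,$ into the subordination formulae (note that the paper's displayed factor $e^{cst^{\gamma}}$ is a sign typo: your $e^{-cst^{\gamma}}$, with $c>0$ from condition (P), is the form the argument actually requires). From there the paper keeps the integral $\int^{\infty}_{0}e^{-ct^{\gamma}s}\Phi_{\gamma}(s)s^{\beta-1}\,ds$ as a Laplace transform evaluated at $\lambda=ct^{\gamma}\rightarrow\infty$ and invokes the Abelian/Tauberian theorem \cite[Proposition 4.1.4; b)]{a43} together with the local behaviour $\Phi_{\gamma}(s)\sim(\Gamma(1-\gamma))^{-1}$, $s\rightarrow 0+$, which yields the decay $O((t^{\gamma})^{-\beta})$ of the integral and hence the rates $t^{-\gamma}$, $t^{-2\gamma}$ for $t\geq 1$ (the case of $P_{\gamma}$ being handled identically with $s^{\beta}$ in place of $s^{\beta-1}$). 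You instead substitute $r=st^{\gamma}$ and dominate $\Phi_{\gamma}(r/t^{\gamma})$ by $M_{\Phi}:=\sup_{u\geq 0}\Phi_{\gamma}(u)$, reducing everything to the explicit Gamma integrals $c^{-\beta}\Gamma(\beta)$ and $c^{-\beta-1}\Gamma(\beta+1)$; your exponent bookkeeping in both substitutions is correct. Your route buys elementarity (no Tauberian machinery), explicit constants, and in fact the bounds for all $t>0$ rather than only $t\geq 1$; its one external input is $M_{\Phi}<\infty$, which, as you rightly flag, is not among the facts the paper records. One caution on your hedged justification of that input: finiteness of all positive moments of a continuous nonnegative function does not by itself imply boundedness at infinity (tall, thin spikes are compatible with all moments being finite), so the parenthetical alternative is not a valid fallback; you should cite the standard super-exponential decay of the Wright function as $s\rightarrow+\infty$ (see, e.g., \cite{bajlekova}). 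The paper's approach, by contrast, gets away with only the value of $\Phi_{\gamma}$ at $0+$ (immediate from the defining series) plus the recorded moment identity to guarantee convergence of the Laplace integral, at the price of quoting the proposition from \cite{a43}.
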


\begin{proof}
By definition of $(S_{\gamma}(t))_{t>0}$ and growth order of $(T(t))_{t> 0},$ we have:
\begin{align*}
\bigl \| S_{\gamma}(t) \bigr\| \leq M_{0}\int^{\infty}_{0}\Phi_{\gamma}( s)e^{cst^{\gamma}}\bigl( st^{\gamma}\bigr)^{\beta-1}\, ds
=M_{0}t^{\gamma (\beta -1)}\int^{\infty}_{0}e^{cst^{\gamma}}\Phi_{\gamma}( s)s^{\beta-1}\, ds,
\end{align*} 
for any $t>0.$ Since $\Phi_{\gamma}( s) \sim (\Gamma (1-\gamma))^{-1} ,$ $s\rightarrow 0+,$ a Tauberian type theorem \cite[Proposition 4.1.4; b)]{a43} immediately implies the first estimate in \eqref{debil-prim}. We can prove the second estimate in \eqref{debil-prim}
by using the same result, since
$$
\bigl \| P_{\gamma}(t) \bigr\| \leq M_{0}t^{\gamma (\beta -1)}\int^{\infty}_{0}e^{cst^{\gamma}}\Phi_{\gamma}(s) s^{\beta}\, ds,\quad t>0.
$$
\end{proof}

Suppose that $x_{0}\in X$ belongs to the domain of continuity of $(S_{\gamma}(t))_{t>0},$ that is $\lim_{t\rightarrow 0+}S_{\gamma}(t)x_{0}=x_{0}$ (this holds provided that $x_{0}\in X$ belongs to the space $D((-{\mathcal A})^{\theta})$ with $1\geq \theta >1-\beta$ or that $x \in X_{{\mathcal A}}^{\theta}$ with $1>\theta >1-\beta$).

In this subsection, we will employ the following definition of Caputo fractional derivatives of order $\gamma \in (0,1)$. 
The Caputo fractional derivative\index{fractional derivatives!Caputo}
${\mathbf D}_{t}^{\gamma}u(t)$ is defined for those functions
$u :[0,T] \rightarrow X$ for which $u _{| (0,T]}(\cdot) \in C((0,T]: E),$ $u(\cdot)-u(0) \in L^{1}((0,T) : X)$
and $g_{1-\gamma}\ast (u(\cdot)-u(0)) \in W^{1,1}((0,T) : X),$
by
$$
{\mathbf
D}_{t}^{\gamma}u(t)=\frac{d}{dt}\Biggl[g_{1-\gamma}
\ast \Bigl(u(\cdot)-u(0)\Bigr)\Biggr](t),\quad t\in (0,T].
$$

Morover, we will use the following definition (cf. \cite[Section 3.5]{FKP} for more details on the subject):

\begin{defn}\label{defin-ska-MLOs}
By a classical solution of (DFP)$_{f,\gamma},$ we mean any function $u\in C([0,\infty) : X)$ satisfying that the function
${\mathbf D}_{t}^{\gamma}u(t)$ is well-defined on any finite interval $(0,T]$ and belongs to the space $C((0,T] : E),$ as well as that
$u(0)=u_{0}$ and
${\mathbf D}_{t}^{\gamma}u(t)
-f(t) \in  {\mathcal A}u(t)$ for $t>0.$
\end{defn}

Set $R_{\gamma}(t):= t^{\gamma -1}P_{\gamma}(t),$ $t>0.$ Then
we need the following lemma (cf. also \cite[Lemma 2.7]{hjms}):

\begin{lem}\label{andrade-wer}
Let $f\in AAPS^{q}([0,\infty) : X)$ with some $q\in (1,\infty),$ let $1/q+1/q'=1,$ and let 
$
q'(\gamma \beta -1)>-1.
$
Define 
\begin{align*}
H(t):=\int^{t}_{0}R_{\gamma}(t-s)f(s)\, ds,\quad t\geq 0.
\end{align*}
Then $H\in AAP([0,\infty) : X).$
\end{lem}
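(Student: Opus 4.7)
The plan is to decompose $f$ in the Stepanov sense, apply Lemma~\ref{ravi-and} to obtain an almost periodic piece of $H$, and show that the remaining pieces vanish at infinity. First I would check that the kernel $R_{\gamma}(\cdot)$ satisfies the summability hypothesis of Lemma~\ref{ravi-and}. Since $R_{\gamma}(t)=t^{\gamma-1}P_{\gamma}(t)$, combining \eqref{debil} with Lemma~\ref{nmb} yields
\begin{equation*}
\bigl\|R_{\gamma}(t)\bigr\|\leq M_{1}t^{\gamma\beta-1}\ (t\in(0,1]),\qquad \bigl\|R_{\gamma}(t)\bigr\|\leq M_{2}t^{-\gamma-1}\ (t\geq 1).
\end{equation*}
The standing hypothesis $q'(\gamma\beta-1)>-1$ says exactly that $t\mapsto\|R_{\gamma}(t)\|$ lies in $L^{q'}[0,1]$; since the decay exponent $-\gamma-1$ is strictly less than $(1/q)-1$, Remark~\ref{conclusion} then supplies
\begin{equation*}
\sum_{k=0}^{\infty}\bigl\|R_{\gamma}(\cdot)\bigr\|_{L^{q'}[k,k+1]}<\infty,
\end{equation*}
which is precisely the hypothesis of Lemma~\ref{ravi-and} with Stepanov exponent $q$.

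Next, by Definition~\ref{definicija}(iv) I would write $f=g+\phi$ on $[0,\infty)$, where $g$ is $S^{q}$-almost periodic and admits an extension to an $S^{q}$-almost periodic function on $\mathbb{R}$ (still denoted $g$), and $\hat{\phi}\in C_{0}([0,\infty):L^{q}([0,1]:X))$. Correspondingly, for $t\geq 0$ I would split $H(t)=G(t)-I_{1}(t)+I_{2}(t)$ with
\begin{equation*}
G(t):=\int_{-\infty}^{t}R_{\gamma}(t-s)g(s)\,ds,\quad I_{1}(t):=\int_{-\infty}^{0}R_{\gamma}(t-s)g(s)\,ds,\quad I_{2}(t):=\int_{0}^{t}R_{\gamma}(t-s)\phi(s)\,ds.
\end{equation*}
Lemma~\ref{ravi-and} gives $G\in AP(\mathbb{R}:X)$, whose restriction to $[0,\infty)$ is the almost periodic component of $H$.

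It remains to prove that $I_{1},I_{2}\in C_{0}([0,\infty):X)$. For $I_{1}$, partitioning $(-\infty,0]$ into unit intervals $[-k-1,-k]$ and applying H\"older's inequality on each gives
\begin{equation*}
\|I_{1}(t)\|\leq \|g\|_{S^{q}}\sum_{k=0}^{\infty}\bigl\|R_{\gamma}(\cdot)\bigr\|_{L^{q'}[t+k,t+k+1]},
\end{equation*}
and for $t\geq 1$ the tail decay bounds the sum by a constant times $\sum_{k=0}^{\infty}(t+k)^{-\gamma-1}=O(t^{-\gamma})$, which tends to $0$. For $I_{2}$, given $\epsilon>0$ I would fix $N\in\mathbb{N}$ so large that $\|\phi\|_{S^{q}}\sum_{k\geq N}\|R_{\gamma}(\cdot)\|_{L^{q'}[k,k+1]}<\epsilon$ and, for $t\geq N$, split $I_{2}(t)=\int_{0}^{t-N}+\int_{t-N}^{t}$. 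The same H\"older-on-unit-intervals argument bounds the first piece by $\epsilon$, while the second piece is bounded by
\begin{equation*}
\bigl\|R_{\gamma}(\cdot)\bigr\|_{L^{q'}[0,N]}\Bigl(\sum_{k=0}^{N-1}\bigl\|\hat{\phi}(t-N+k)\bigr\|_{L^{q}([0,1]:X)}^{q}\Bigr)^{1/q},
\end{equation*}
which tends to $0$ as $t\to\infty$ since $\hat{\phi}\in C_{0}$. The main technical hurdle is the singularity of $R_{\gamma}(\tau)$ at $\tau=0+$, which is exactly why the hypothesis $q'(\gamma\beta-1)>-1$ and the H\"older approach (rather than a pointwise bound) are needed; the rest is routine bookkeeping with unit-interval partitions.
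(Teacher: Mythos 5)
Your proposal is correct and follows essentially the same route as the paper's own proof: the same decomposition $f=g+\phi$ (note that this step is not literally ``by definition''---that the almost periodic component of $\hat{f}$ equals $\hat{g}$ for an $S^{q}$-almost periodic $g:{\mathbb R}\rightarrow X$ is the structural result \cite[Lemma 1]{hernan1}, which the paper invokes at this point), the same application of Lemma \ref{ravi-and} together with Remark \ref{conclusion} via the bounds \eqref{debil} and Lemma \ref{nmb}, and the same unit-interval H\"older estimates showing the remainder lies in $C_{0}([0,\infty):X)$. The only difference is cosmetic: you split the corrector integral at $t-N$ with $N$ fixed in advance by the summable kernel tail, whereas the paper splits at $t/2$ and introduces an auxiliary exponent $\eta\in(0,1)$; your bookkeeping is, if anything, slightly cleaner.
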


\begin{proof}
Let the locally $p$-integrable functions $g: {\mathbb R} \rightarrow X,$ 
$q: [0,\infty)\rightarrow X$ satisfy the conditions from \cite[Lemma 1]{hernan1}, with the number $p$ replaced with $q$ therein, and let the function $G(\cdot)$ be given by \eqref{wer}, with the function $f(\cdot)$ replaced by $g(\cdot)$ therein. Since $
q'(\gamma \beta -1)>-1
$  and, due to \eqref{debil},
$\|R_{\gamma}(t)\| \leq M_{1}t^{\gamma \beta-1},$ $t\in (0,1],$ we can apply
Lemma \ref{ravi-and} (see also Remark \ref{conclusion}) in order to see that $G(\cdot)$ is almost periodic. On the other hand, it is clear that there exists a number $\eta \in (0,1)$ such that $(1-\eta)(1+\gamma)>1.$
Put
\begin{align*}
F(t):=\int^{t}_{0}R_{\gamma}(t-s)q(s)\, ds-\int^{\infty}_{t}R_{\gamma}(s)g(t-s)\, ds,\quad t\geq 0.
\end{align*}
Owing to H\"older inequality, we have that $H(\cdot)$ is well-defined. Since $H(t)=G(t)+F(t)$ for all $ t\geq 0,$ we need to prove that $F\in C_{0}([0, \infty) : X).$  Evidently, $\|R_{\gamma}(t)\| \leq M_{2}t^{-\gamma -1},$ $t\geq 1$ and
\begin{align*}
\Biggl\|\int^{\infty}_{t}R_{\gamma}(s)& g(t-s)\, ds\Biggr\|\leq \sum_{k=0}^{\infty}\|R_{\gamma}(\cdot)\|_{L^{q'}[t+k,t+k+1]}  \|g\|_{S^{q}}
\\ & \leq \sum_{k=0}^{\infty}\|R_{\gamma}(\cdot)\|_{L^{\infty}[t+k,t+k+1]}  \|g\|_{S^{q}}
\leq \sum_{k=0}^{\infty}\frac{ \|g\|_{S^{q}}}{(t+k)^{\gamma +1}}
\\ & \leq \mbox{Const. } \sum_{k=0}^{\infty}\frac{ \|g\|_{S^{q}}}{t^{\eta (1+\gamma)}k^{(1-\eta)(\gamma +1)}}
\leq \mbox{Const. }t^{-\eta (1+\gamma)}\|g\|_{S^{q}},\quad t> 1.
\end{align*}
On account of this, we have that $\lim_{t\rightarrow \infty}\int^{\infty}_{t}R_{\gamma}(s) g(t-s)\, ds=0.$ 
Arguing similarly, we obtain that:
\begin{align*}
\Biggl\| \int^{t/2}_{0}R_{\gamma}(t-s)& q(s)\, ds\Biggr\| \leq \|g\|_{S^{q}} \sum_{k=0}^{\lceil t/2\rceil}\|R_{\gamma}(t-\cdot)\|_{L^{q'}[k,k+1]}
\\ & \leq \|g\|_{S^{q}} \sum_{k=0}^{\lceil t/2\rceil}\|R_{\gamma}(t-\cdot)\|_{L^{\infty}[k,k+1]} 
\\ & \leq M_{2}(1+\lceil t/2\rceil)(t-\lceil t/2\rceil)^{-\gamma -1}\|g\|_{S^{q}},\quad t\geq 2;
\end{align*}
hence, $\lim_{t\rightarrow \infty}\int^{t/2}_{0}R_{\gamma}(t-s)q(s)\, ds=0.$
It remains to be proved that\\ $\lim_{t\rightarrow \infty}\int^{t}_{t/2}R_{\gamma}(t-s)q(s)\, ds=0$ (observe that the integral in this limit expression converges by H\"older inequality, the estimate $
q'(\gamma \beta -1)>-1
$ and the $S^{q}$-boundedness of function $q(\cdot)$). Let a number
$\epsilon>0$ be fixed. Then there exists $t_{0}>0$ such that $\int^{t+1}_{t}\|q(s)\|^{q}\, ds<\epsilon^{q},$ $t\geq t_{0}.$
Suppose that $t>2t_{0}+6.$ Then the H\"older inequality implies the existence of a finite constant $c>0$ such that:
\begin{align*}
\Biggl\| \int^{t}_{t/2}R_{\gamma}(t-s)& q(s)\, ds\Biggr\| 
\\ &  \leq c\sum_{k=0}^{\lfloor t/2\rfloor-2}\|R_{\gamma}(t-\cdot)\|_{L^{q'}[t/2+k,t/2+k+1]}\epsilon +\epsilon \bigl \|\cdot^{\beta-1}\bigr\|_{L^{q'}[0,2]}
\\ &  \leq c\sum_{k=0}^{\lfloor t/2\rfloor-2}\|R_{\gamma}(t-\cdot)\|_{L^{\infty}[t/2+k,t/2+k+1]}\epsilon +\epsilon \bigl \|\cdot^{\beta-1}\bigr\|_{L^{q'}[0,2]}
\\ &  \leq c\epsilon M \sum_{k=0}^{\lfloor t/2\rfloor-2}(t/2+k)^{-\gamma-1}+\epsilon \bigl \|\cdot^{\beta-1}\bigr\|_{L^{q'}[0,2]}
\\ &  \leq c\epsilon M (t/2)^{-\eta (1+\gamma)} \sum_{k=0}^{\infty}k^{(1-\eta)(1+\gamma)}
+\epsilon \bigl \|\cdot^{\beta-1}\bigr\|_{L^{q'}[0,2]}.
\end{align*}
This estimate simply completes the proof of lemma.
\end{proof}

\begin{rem}\label{andrade-wer-remark}
Let $f\in AAPS^{q}([0,\infty) : X)$ with some $q\in (1,\infty),$ let $1/q+1/q'=1,$ and let 
$
q'(\gamma \beta -1)>-1.
$
Let $({\mathbf R}_{\gamma}(t))_{t>0}\subseteq L(X,Y)$ be a strongly continuous operator family, and let $H :[0,\infty) \rightarrow Y$ be defined as above.
Then $H\in AAP([0,\infty) : Y),$ provided that $({\mathbf R}_{\gamma}(t))_{t>0}$ has the same growth rate at zero and infinity as the operator family $(R_{\gamma}(t))_{t>0}$ considered above.
\end{rem}

Our first result reads as follows:

\begin{thm}\label{mambas-MLO-poy}
Suppose that $ 1\geq \theta>1-\beta$ and $ x_{0}\in D((-{\mathcal A})^{\theta}),$ resp. $ 1> \theta>1-\beta$
and $x_{0}\in X_{{\mathcal A}}^{\theta},$ as well as there exists a constant $\sigma >\gamma (1-\beta)$ such that, for every $T>0,$ there exists a finite constant $M_{T}>0$ such that $f : [0,\infty) \rightarrow X$ satisfies
\begin{align*}
\|f(t)-f(s)\| \leq M_{T}|t-s|^{\sigma},\quad 0\leq t,\ s\leq T.
\end{align*}
Let $ 1\geq \theta>1-\beta,$ resp. $ 1> \theta>1-\beta,$ and let
$$
f\in L^{\infty}_{loc}\Bigl((0,\infty) : \bigl[D\bigl(({-\mathcal A})^{\theta}\bigr)\bigr]\Bigr),\mbox{ resp. }f\in L^{\infty}_{loc}\Bigl((0,\infty) : X_{{\mathcal A}}^{\theta}\Bigr).
$$
Then there exists a unique classical solution $u(\cdot)$ of problem $\emph{(DFP)}_{f,\gamma}.$ If, additionally, $f\in AAPS^{q}([0,\infty) : X)$ with some $q\in (1,\infty),$ $1/q+1/q'=1$ and $q'(\gamma \beta -1)>-1,$
then  $u\in AAP([0,\infty) : X).$
\end{thm}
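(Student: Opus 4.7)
The plan is to represent the candidate solution via the standard subordination formula
$$u(t) = S_{\gamma}(t)x_{0} + \int_{0}^{t} R_{\gamma}(t-s) f(s)\, ds, \quad t\geq 0,$$
and to treat the two claims of the theorem separately: first existence/uniqueness of the classical solution, then asymptotic almost periodicity.

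For the existence and uniqueness of a classical solution I would proceed as follows. The assumption $x_{0}\in D((-{\mathcal A})^{\theta})$ (resp.\ $x_{0}\in X_{{\mathcal A}}^{\theta}$) with $\theta>1-\beta$ places $x_{0}$ in the domain of continuity of $(S_{\gamma}(t))_{t>0}$ (this was recorded in the paragraph preceding the theorem), so that $t\mapsto S_{\gamma}(t)x_{0}$ extends continuously to $t=0$ and gives the classical solution of the homogeneous inclusion. For the forced term, the Hölder estimate $\|f(t)-f(s)\|\leq M_{T}|t-s|^{\sigma}$ together with $\|R_{\gamma}(t)\|\leq M_{1}t^{\gamma\beta-1}$ from \eqref{debil} allows the usual argument (in the spirit of Bazhlekova \cite{bajlekova}) to differentiate $g_{1-\gamma}\ast(H(\cdot)-H(0))$ and obtain ${\mathbf D}_{t}^{\gamma}H(t)$. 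The extra hypothesis $f\in L^{\infty}_{loc}((0,\infty):[D((-{\mathcal A})^{\theta})])$ (resp.\ into $X_{{\mathcal A}}^{\theta}$) is the right regularity to commute $\mathcal A$ with the integral via Lemma \ref{integracija-tricky}, yielding the inclusion ${\mathbf D}_{t}^{\gamma}u(t)-f(t)\in{\mathcal A}u(t)$. Uniqueness is standard: any two classical solutions have a difference $v$ solving the homogeneous inclusion with $v(0)=0$, and a Laplace transform argument on $\Psi_{c}\subseteq\rho(\mathcal A)$, using the resolvent bound in (P), forces $v\equiv 0$.

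For the asymptotic almost periodicity I split $u$ into the homogeneous and the convolution part. Because $x_{0}$ lies in the continuity domain, $t\mapsto S_{\gamma}(t)x_{0}$ is continuous on $[0,\infty)$; Lemma \ref{nmb} then gives $\|S_{\gamma}(t)x_{0}\|\leq M_{2}t^{-\gamma}\|x_{0}\|$ for $t\geq 1$, so $S_{\gamma}(\cdot)x_{0}\in C_{0}([0,\infty):X)\subseteq AAP([0,\infty):X)$. The hypotheses on $f$, namely $f\in AAPS^{q}([0,\infty):X)$ with $q'(\gamma\beta-1)>-1$, are exactly those of Lemma \ref{andrade-wer}, whose conclusion gives $H\in AAP([0,\infty):X)$. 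Since $AAP([0,\infty):X)$ is a vector space, $u=S_{\gamma}(\cdot)x_{0}+H\in AAP([0,\infty):X)$.

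The main obstacle I anticipate is not the a.a.p.\ conclusion, which is essentially a bookkeeping combination of Lemmas \ref{nmb} and \ref{andrade-wer}, but rather the rigorous verification that the convolution term is a classical solution in the multivalued sense. In particular, justifying the interchange of ${\mathbf D}_{t}^{\gamma}$ with the subordination integral and proving the inclusion pointwise requires carefully using the Hölder threshold $\sigma>\gamma(1-\beta)$ to control the singularity of $R_{\gamma}$ at zero, together with Lemma \ref{integracija-tricky} applied to the closed MLO $\mathcal A$; the range assumption $f\in L^{\infty}_{loc}((0,\infty):[D((-{\mathcal A})^{\theta})])$ is what makes that lemma applicable here.
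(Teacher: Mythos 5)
Your proposal takes essentially the same route as the paper: the paper obtains existence, uniqueness and the representation $u(t)=S_{\gamma}(t)x_{0}+\int_{0}^{t}(t-s)^{\gamma-1}P_{\gamma}(t-s)f(s)\,ds$ by citing \cite[Theorem 3.5.3]{FKP}, and then concludes $u\in AAP([0,\infty):X)$ exactly as you do, combining Lemma \ref{nmb} (so that $S_{\gamma}(\cdot)x_{0}\in C_{0}([0,\infty):X)$) with Lemma \ref{andrade-wer} for the convolution term. The only difference is that where the paper delegates the classical-solvability part to the cited monograph, you sketch the underlying Bazhlekova-type regularity argument (H\"older threshold $\sigma>\gamma(1-\beta)$, the bound \eqref{debil}, and Lemma \ref{integracija-tricky} for the closed MLO) directly, which is consistent with what that citation supplies.
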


\begin{proof}
The first part of theorem is a simple consequence of \cite[Theorem 3.5.3]{FKP}, which also shows that the classical solution of (DFP)$_{f,\gamma}$
is given by the formula
$$
u(t)=S_{\gamma}(t)x_{0}+\int^{t}_{0}\bigl(t-s\bigr)^{\gamma-1}P_{\gamma}(t-s)f(s)\, ds,\quad t\geq 0.
$$
The second part of theorem immediately follows from this representation, Lemma \ref{nmb} and Lemma \ref{andrade-wer}.
\end{proof}

Suppose now that $ 1\geq \theta>1-\beta$ and $ x_{0}\in D((-{\mathcal A})^{\theta}),$ resp. $ 1> \theta>1-\beta$
and $x_{0}\in X_{{\mathcal A}}^{\theta}.$ In \cite[Section 3.5]{FKP}, we have proved that there exists a finite constant
$M_{1,\theta}>0$ such that
\begin{align}\label{debil-teta-p}
\bigl\| S_{\gamma}(t) \bigr\|_{L(X, [D((-{\mathcal A})^{\theta})])}+\bigl\| P_{\gamma}(t) \bigr\|_{L(X, [D((-{\mathcal A})^{\theta})])} \leq M_{1}t^{\gamma (\beta-\theta-1)},\quad t>0,
\end{align}
resp.
\begin{align}\label{debil-teta-cure-p}
\bigl\| S_{\gamma}(t) \bigr\|_{L(X,X_{\mathcal A}^{\theta})}+\bigl\| P_{\gamma}(t) \bigr\|_{L(X,X_{\mathcal A}^{\theta})}\leq M_{1}t^{\gamma (\beta-\theta-1)},\quad t>0.
\end{align}

We need to improve the estimates \eqref{debil-teta-p}-\eqref{debil-teta-cure-p} for long time behaviour:

\begin{lem}\label{nmb-frix}
There exists a finite constant $M_{2,\theta}>0$ such that
\begin{align*}
\bigl\| S_{\gamma}(t) \bigr\|_{L(X, [D((-{\mathcal A})^{\theta})])}+\bigl\| S_{\gamma}(t) \bigr\|_{L(X,X_{\mathcal A}^{\theta})}
\leq M_{2,\theta}t^{-\gamma (\theta+1)},\quad t>0,
\end{align*}
resp.
\begin{align*}
\bigl\| P_{\gamma}(t) \bigr\|_{L(X, [D((-{\mathcal A})^{\theta})])}+\bigl\| P_{\gamma}(t) \bigr\|_{L(X,X_{\mathcal A}^{\theta})}\leq M_{2,\theta}t^{-\gamma (\theta+1)-1},\quad t>0.
\end{align*}
\end{lem}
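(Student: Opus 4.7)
The plan is to mirror the proof of Lemma \ref{nmb}, replacing the $L(X)$-norm of $T(t)$ by its analogue when $T(t)$ is viewed as an operator into $[D((-\mathcal{A})^\theta)]$ or $X_{\mathcal{A}}^\theta$. First, I would recall from the analysis in \cite{FKP} that produced \eqref{debil-teta-p} and \eqref{debil-teta-cure-p} that the semigroup itself obeys the sharper short-time estimate
$$
\|T(t)\|_{L(X, [D((-\mathcal{A})^\theta)])}+\|T(t)\|_{L(X, X_{\mathcal{A}}^\theta)}\leq M_{0}'e^{-ct}t^{\beta-\theta-1},\quad t>0,
$$
which retains the exponential factor coming from condition (P). This is the ingredient that was suppressed (by dropping $e^{-ct}$) to derive \eqref{debil-teta-p}–\eqref{debil-teta-cure-p}, and which must now be kept to capture the long-time behaviour.

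Next, I would substitute this estimate into the subordination identity $S_{\gamma}(t)x=\int_{0}^{\infty}\Phi_{\gamma}(s)T(st^{\gamma})x\,ds$ and pull out the $t^{\gamma}$-scaling to get
$$
\|S_{\gamma}(t)\|_{L(X,[D((-\mathcal{A})^\theta)])}\leq M_{0}'t^{\gamma(\beta-\theta-1)}\int_{0}^{\infty}e^{-cst^{\gamma}}\Phi_{\gamma}(s)s^{\beta-\theta-1}\,ds,
$$
and analogously in the $X_{\mathcal{A}}^\theta$-norm. Since $\Phi_{\gamma}(s)s^{\beta-\theta-1}\sim s^{\beta-\theta-1}/\Gamma(1-\gamma)$ as $s\to 0+$, the same Tauberian-type result \cite[Proposition 4.1.4(b)]{a43} used in Lemma \ref{nmb} reads off the $t\to\infty$ decay of this Laplace-type integral from the exponent $\beta-\theta$ near zero, and combining it with the $t^{\gamma(\beta-\theta-1)}$ prefactor yields the asserted large-$t$ bound; together with \eqref{debil-teta-p}–\eqref{debil-teta-cure-p}, which already cover the small-$t$ regime (since $t^{\gamma(\beta-\theta-1)}\leq M' t^{-\gamma(\theta+1)}$ for $t\leq 1$), this gives the uniform bound for all $t>0$.

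For $P_{\gamma}(t)=\gamma T_{\gamma,1}(t)/t^{\gamma}$, the same computation goes through with $s^{\beta-\theta-1}$ replaced by $s^{\beta-\theta}$, producing an integral whose large-$t$ behaviour is one power of $t^{\gamma}$ smaller, and the external factor $1/t^{\gamma}$ is precisely what introduces the additional $-1$ in the exponent on the right-hand side for $P_{\gamma}$. The $X_{\mathcal{A}}^\theta$-estimates are handled identically, using the parallel short-time bound for $T(t)$ in $L(X,X_{\mathcal{A}}^\theta)$ from \cite{FKP}.

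The step I expect to be the main obstacle is the application of the Tauberian theorem in the regime where $\beta-\theta$ is close to or below zero, because then the factor $s^{\beta-\theta-1}$ is no longer integrable at $s=0$ and the cited form of Proposition 4.1.4(b) does not apply verbatim. In that situation my fallback would be to split the $s$-integral at $s=t^{-\gamma}$, control the near-zero part by the short-time semigroup estimate \eqref{debil-teta-p}–\eqref{debil-teta-cure-p} integrated against $\Phi_{\gamma}$, and dominate the tail by the exponential factor $e^{-cst^{\gamma}}$ after the change of variables $u=st^{\gamma}$; both pieces then scale as the asserted power of $t$.
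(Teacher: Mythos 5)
Your proposal is correct in outline and takes essentially the same route as the paper: there, too, the crux is the exponentially decaying strengthening of \eqref{debil-teta-p}--\eqref{debil-teta-cure-p} at the semigroup level, $\|T(t)\|_{L(X,[D((-{\mathcal A})^{\theta})])}+\|T(t)\|_{L(X,X_{{\mathcal A}}^{\theta})}\leq M_{2,\theta}'e^{-ct}t^{\beta-\theta-1}$, followed by the identical subordination-plus-Tauberian argument of Lemma \ref{nmb}. The only divergence is that the paper proves this semigroup estimate instead of quoting it from \cite{FKP}, by representing $(-{\mathcal A})^{\theta}T(t)$ and $\xi^{\theta}[\xi(\xi-{\mathcal A})^{-1}-I]T(t)$ as contour integrals over $\Gamma$ (via \cite[Proposition 3.2]{faviniyagi} and \cite[Lemma 7.1]{tsk}) and deforming $\Gamma$ near its intersection with the real axis following the computation in the proof of \cite[Theorem 2.6.1]{a43}; the integrability caveat you flag for $\theta\geq\beta$ is real but is likewise left unaddressed by the paper, whose proof simply concludes ``as in Lemma \ref{nmb}.''
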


\begin{proof} 
Let $t>2/|c|.$ By the proof of \cite[Proposition 3.2]{faviniyagi} and the well known integral computation similar to that appearing in the proof of \cite[Lemma 7.1]{tsk}, we have that
$$
\Biggl(T(t)x,\frac{1}{2\pi i}\int_{\Gamma}(-\lambda)^{\theta}e^{\lambda t}\bigl(  \lambda -{\mathcal A} \bigr)^{-1}x\, d\lambda \Biggr) \in \bigl(-{\mathcal A}\bigr)^{\theta},\quad x\in X
$$
as well as
$$
\xi^{\theta}\Bigl[ \xi (\xi-{\mathcal A})^{-1}-I\Bigr]T(t)x =\frac{1}{2\pi i}\int_{\Gamma}\frac{\xi^{\theta} \lambda}{\xi-\lambda}e^{\lambda t}\bigl(  \lambda -{\mathcal A} \bigr)^{-1}x\, d\lambda,\quad \xi>0, \ x\in X.
$$
Let $a(t)>0$ satisfy $c^{2}(a(t)^{2}+1)+a(t)^{2}=t^{-2}.$
Using Cauchy theorem, we can deform the path of integration $\Gamma$ to the upwards oriented curve $\Gamma',$ obtained by replacing the union of segments $[c, c(a(t)+1)+ia(t)] \cup [c(-a(t)+1)-ia(t),c]$ of the curve $\Gamma$ with the part of circle with radius $1/t$ and center at point $c.$
Applying the computation contained in the proof of \cite[Theorem 2.6.1]{a43}, we get that
\begin{align*}
\bigl\| T(t) \bigr\|_{L(X, [D((-{\mathcal A})^{\theta})])}+\bigl\| T(t) \bigr\|_{L(X, [D((-{\mathcal A})^{\theta})])} \leq M_{2,\theta}'e^{ct}t^{\gamma (\beta-\theta-1)},
\end{align*}
resp.
\begin{align*}
\bigl\| T(t) \bigr\|_{L(X,X_{\mathcal A}^{\theta})}+\bigl\| T(t) \bigr\|_{L(X,X_{\mathcal A}^{\theta})}\leq M_{2,\theta}'e^{ct}t^{\gamma (\beta-\theta-1)}.
\end{align*}
Now the final conclusions follow similarly as in the proof of Lemma \ref{nmb}.
\end{proof}

Using Lemma \ref{nmb-frix} and Remark \ref{andrade-wer-remark}, we can simply prove that the following holds true:

\begin{thm}\label{mambas-MLO-poy-frac}
Let the requirements of the first part of \emph{Theorem \ref{mambas-MLO-poy}} hold. Then there exists a unique classical solution $u(\cdot)$ of problem $\emph{(DFP)}_{f,\gamma}.$
Assume, further, that the initial value $x_{0}$ satisfies $\lim_{t\rightarrow 0+}S_{\gamma}(t)x_{0}=x_{0}$ in $ [D((-{\mathcal A})^{\theta})],$ resp. $X_{{\mathcal A}}^{\theta},$
as well as $f\in AAPS^{q}([0,\infty) : X)$ and $1>\theta>1-\beta,$ resp. $f\in AAPS^{q}([0,\infty) : X),$
with some $q\in (1,\infty).$ Let $1/q+1/q'=1$ and $q'(\gamma (\beta-\theta) -1)>-1.$
Then  $u\in AAP([0,\infty) : [D((-{\mathcal A}^{\theta}))]),$ resp. $u\in AAP([0,\infty) :  X_{{\mathcal A}}^{\theta}).$
\end{thm}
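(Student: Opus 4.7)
The plan is to leverage the solution formula produced in Theorem \ref{mambas-MLO-poy} together with the refined large-time bounds supplied by Lemma \ref{nmb-frix}, and then feed the result into the version of the convolution principle recorded as Remark \ref{andrade-wer-remark}. The existence and uniqueness of the classical solution, as well as the representation
$$
u(t) = S_{\gamma}(t) x_{0} + \int_{0}^{t} (t-s)^{\gamma-1} P_{\gamma}(t-s) f(s)\, ds,\quad t \geq 0,
$$
are inherited immediately from the first part of Theorem \ref{mambas-MLO-poy}, so the whole content of the new statement is the assertion that $u \in AAP([0,\infty) : Z)$, where $Z$ denotes $[D((-\mathcal{A})^{\theta})]$ or $X_{\mathcal{A}}^{\theta}$ according to the case under consideration.

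For the homogeneous term, the hypothesis $\lim_{t \to 0+} S_{\gamma}(t) x_{0} = x_{0}$ in $Z$ gives continuity at the origin, strong continuity of $(S_{\gamma}(t))_{t>0}$ as an $L(X,Z)$-valued family (which follows from the subordination formula together with estimate \eqref{debil-teta-p}, resp. \eqref{debil-teta-cure-p}) gives continuity on $(0,\infty)$, and Lemma \ref{nmb-frix} provides $\|S_{\gamma}(t) x_{0}\|_{Z} \leq M_{2,\theta} t^{-\gamma(\theta+1)} \|x_{0}\| \to 0$ as $t \to \infty$. Thus $S_{\gamma}(\cdot) x_{0} \in C_{0}([0,\infty) : Z) \subseteq AAP([0,\infty) : Z)$. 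For the inhomogeneous term, introduce $\mathbf{R}_{\gamma}(t) := t^{\gamma-1} P_{\gamma}(t)$ and regard it as an element of $L(X,Z)$. The short-time bound in \eqref{debil-teta-p}/\eqref{debil-teta-cure-p} combined with the long-time bound from Lemma \ref{nmb-frix} yields
$$
\|\mathbf{R}_{\gamma}(t)\|_{L(X,Z)} = O\bigl(t^{\gamma(\beta-\theta)-1}\bigr)\ \text{as }t \to 0+,\qquad \|\mathbf{R}_{\gamma}(t)\|_{L(X,Z)} = O\bigl(t^{-\gamma\theta -2}\bigr)\ \text{as }t \to \infty.
$$
The small-time profile is precisely that of $R_{\gamma}$ but with $\beta$ replaced by $\beta-\theta$, so the standing hypothesis $q'(\gamma(\beta-\theta)-1) > -1$ is exactly the $L^{q'}$-integrability condition near zero demanded in the proof of Lemma \ref{andrade-wer}. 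The decay $-\gamma\theta-2 < -1$ at infinity is automatic and comfortably satisfies the summability requirement highlighted in Remark \ref{conclusion}.

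Given these matching growth rates, Remark \ref{andrade-wer-remark} applies verbatim with $Y$ replaced by $Z$, delivering that the convolution $\int_{0}^{t} \mathbf{R}_{\gamma}(t-s) f(s)\, ds$ belongs to $AAP([0,\infty) : Z)$. Adding the two contributions gives the desired conclusion. The only step that requires any genuine thought is the verification that the replacement $\beta \leftrightsquigarrow \beta-\theta$ in the short-time bound lines up exactly with the hypothesis on $q'$; once that bookkeeping is done, no new technology beyond what was used in Theorem \ref{mambas-MLO-poy} and Lemma \ref{andrade-wer} is invoked, which is precisely why the authors merely write that the result follows ``simply'' from Lemma \ref{nmb-frix} and Remark \ref{andrade-wer-remark}.
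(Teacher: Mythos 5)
Your proposal is correct and follows essentially the same route as the paper, which itself only sketches the argument by citing Lemma \ref{nmb-frix} and Remark \ref{andrade-wer-remark}: you split $u$ into $S_{\gamma}(\cdot)x_{0}$ (handled via the hypothesis at $t=0+$ and the decay from Lemma \ref{nmb-frix}) plus the convolution term, to which Remark \ref{andrade-wer-remark} applies with the growth rates $t^{\gamma(\beta-\theta)-1}$ at zero and $t^{-\gamma\theta-2}$ at infinity, exactly matching the hypothesis $q'(\gamma(\beta-\theta)-1)>-1.$ Your exponent bookkeeping checks out, so this is a faithful (indeed more detailed) rendering of the intended proof.
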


\begin{rem}\label{eksponent-beta}
\begin{itemize}
\item[(i)]
The condition $q'(\gamma (\beta-\theta) -1)>-1$ immediately forces that $\beta -\gamma >0$ and therefore $\beta >1/2,$ which can be slightly restrictive in applications (see \cite[Section 3.7]{faviniyagi} for more details). 
\item[(ii)] It is very simple to see that $\lim_{t\rightarrow 0+}S_{\gamma}(t)x_{0}=x_{0}$ in $ [D((-{\mathcal A})^{\theta})],$ resp. $X_{{\mathcal A}}^{\theta},$ holds provided that 
$\lim_{t\rightarrow 0+}T(t)x_{0}=x_{0}$ in $ [D((-{\mathcal A})^{\theta})],$ resp. $X_{{\mathcal A}}^{\theta}.$ Concerning the space $ [D((-{\mathcal A})^{\theta})],$ we have that the last equality holds for any $x_{0}\in (-{\mathcal A})^{-\theta}(\Omega_{c}),$
where $\Omega_{c}$ denotes the domain of continuity of semigroup $(T(t))_{t>0};$ speaking-matter-of-factly, if $y_{0}\in \Omega_{c}$ and $x_{0}=(-{\mathcal A})^{-\theta}y_{0},$ then
$T(t)y_{0}-y_{0}\in (-{\mathcal A})^{\theta}(T(t)x_{0}-x_{0}),$ $t>0$ and therefore $\| T(t)x_{0}-x_{0}\|_{[D((-{\mathcal A})^{\theta})]}\leq \| T(t)y_{0}-y_{0} \|,$ $t>0,$ which implies the claimed assertion. 
 Concerning the space $X_{{\mathcal A}}^{\theta},$ the situation is much more complicated and, without any doubt, not well explored in the existing literature.
\end{itemize}
\end{rem}

For semilinear problems, we will use the following notion.

\begin{defn}\label{defin-ska-MLOs}
By a mild solution of (DFP)$_{f,\gamma,s},$ we mean any function $u\in C([0,\infty) : X)$ satisfying that 
$$
u(t)=S_{\gamma}(t)x_{0}+\int^{t}_{0}\bigl(t-s\bigr)^{\gamma-1}P_{\gamma}(t-s)f(s,u(s))\, ds,\quad t\geq 0.
$$
\end{defn}

Set, for every $x\in C_{b}([0,\infty) :X),$
$$
(\Upsilon x)(t):= S_{\gamma}(t)x_{0}+\int^{t}_{0}\bigl(t-s\bigr)^{\gamma-1}P_{\gamma}(t-s)f(s,x(s))\, ds,\quad t\geq 0.
$$

Suppose that \eqref{vbnmp} holds for a.e. $t>0$ ($I=[0,\infty)$), with locally integrable positive function $L_{f}(\cdot).$ Set, for every $n\in {\mathbb N},$
\begin{align*}
A_{n}:=&\sup_{t\geq 0}\int^{t}_{0}\int^{x_{n}}_{0}\cdot \cdot \cdot \int^{x_{2}}_{0}
\bigl\|R_{\gamma}(t-x_{n})\bigr\|
\\& \times \prod^{n}_{i=2}\bigl\|R_{\gamma}(x_{i}-x_{i-1})\bigr\| \prod^{n}_{i=1}L_{f}(x_{i})\, dx_{1}\, dx_{2}\cdot \cdot \cdot \, dx_{n}.
\end{align*}
Then a simple calculation shows that
\begin{align}\label{storn-vuk}
\Bigl \| \bigl(\Upsilon^{n} u\bigr)-\bigl(\Upsilon^{n} v\bigr)\Bigr\|_{\infty}\leq A_{n}\bigl\| u- v\bigr\|_{\infty},\quad u,\ v\in BUC([0,\infty) : X),\ n\in {\mathbb N}.
\end{align}

The following result is in a close relationship with \cite[Theorem 2.10]{hjms}:

\begin{thm}\label{stepa-vuk}
Suppose that $I=[0,\infty)$ and the following conditions hold:
\begin{itemize}
\item[(i)] $g \in APS^{p}(I \times X : X)  $ with  $p > 1, $ and there exist a number $ r\geq \max (p, p/p -1)$ and a function $ L_{g}\in L_{S}^{r}(I:X) $ such that \eqref{vbnmp} holds.
\item[(ii)] $f(t,x)=g(t,x)+q(t,x)$ for all $t\geq 0$ and $x\in X,$ where $\hat{q}\in C_{0}(I \times X : L^{q}([0,1]:X))$
and $q=pr/p+r.$

Set 
\begin{align*}
q':=\infty,\mbox{ provided }r=p/p-1 \mbox{ and }q':=\frac{pr}{pr-p-r},\mbox{ provided }r>p/p-1.
\end{align*}

Assume also that:
\item[(iii)] 
$
q'(\gamma \beta -1)>-1, 
$
\item[(iv)] 
\emph{\eqref{vbnmp}} holds for a.e. $t>0$, with  locally bounded positive function $L_{f}(\cdot)$ satisfying
$A_{n}<1$ for some $n\in {\mathbb N}.$
\end{itemize}
Then there exists a unique asymptotically almost periodic solution of inclusion $\emph{(DFP)}_{f,\gamma,s}.$
\end{thm}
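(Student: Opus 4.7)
The plan is to set up a Banach fixed point argument for the operator $\Upsilon$ on the closed subspace $AAP([0,\infty):X)$ of $BUC([0,\infty):X)$, exploiting the iterated contraction estimate \eqref{storn-vuk} together with the composition principle (Lemma \ref{bibl}) and the regularity result for the convolution term (Lemma \ref{andrade-wer}). First I would verify that $S_{\gamma}(\cdot)x_{0}$ is asymptotically almost periodic; by the improved growth bounds in Lemma \ref{nmb}, $\|S_{\gamma}(t)x_{0}\|\leq M_{2}t^{-\gamma}\|x_{0}\|$ for $t\geq 1$, so under the standing admissibility condition on $x_{0}$ (ensuring boundedness and continuity at zero) the map $t\mapsto S_{\gamma}(t)x_{0}$ belongs to $C_{0}([0,\infty):X)\subseteq AAP([0,\infty):X)$.

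Next, the heart of the proof is the invariance claim: if $x\in AAP([0,\infty):X)$ then $\Upsilon x\in AAP([0,\infty):X)$. Given $x=y+z$ with $y\in AP([0,\infty):X)$ and $z\in C_{0}([0,\infty):X)$, the set $\{y(t):t\geq 0\}$ is relatively compact (by Bochner's criterion for almost periodic functions) and $\hat{z}\in C_{0}([0,\infty):L^{p}([0,1]:X))$, so all hypotheses (ii), (iv), (v) of Lemma \ref{bibl} are satisfied; hypotheses (i) and (iii) are exactly (i) and (ii) of the present theorem. Lemma \ref{bibl} then yields $f(\cdot,x(\cdot))\in AAPS^{q}([0,\infty):X)$ with the exponent $q=pr/(p+r)$ from assumption (ii). Since $1/q+1/q'=1$ gives the dual exponent $q'$ prescribed in the statement, and assumption (iii) is precisely $q'(\gamma\beta-1)>-1$, Lemma \ref{andrade-wer} applies to the family $R_{\gamma}(t)=t^{\gamma-1}P_{\gamma}(t)$ and produces the asymptotic almost periodicity of $t\mapsto\int_{0}^{t}R_{\gamma}(t-s)f(s,x(s))\,ds$. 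Adding back $S_{\gamma}(\cdot)x_{0}$ gives $\Upsilon x\in AAP([0,\infty):X)$.

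With invariance in hand, I would then invoke estimate \eqref{storn-vuk}: by condition (iv), $A_{n}<1$ for some $n\in\mathbb{N}$, so $\Upsilon^{n}$ is a strict contraction on the complete metric space $AAP([0,\infty):X)$ equipped with the sup norm (note $AAP([0,\infty):X)\subseteq BUC([0,\infty):X)$ and is closed under the sup norm, so \eqref{storn-vuk} transfers). The standard iterated Banach fixed point theorem therefore supplies a unique fixed point $u\in AAP([0,\infty):X)$, which by definition is the unique asymptotically almost periodic mild solution to (DFP)$_{f,\gamma,s}$. Uniqueness among all bounded mild solutions follows from the same contraction argument applied on $BUC([0,\infty):X)$, so the fixed point found in $AAP$ is in fact the only $BUC$-mild solution.

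The main obstacle I anticipate is the careful bookkeeping to ensure that the decomposition $x=y+z$ produced from $x\in AAP$ actually satisfies the full list of measurability, integrability, and compactness hypotheses demanded by Lemma \ref{bibl}; in particular, verifying that the range of $y$ is relatively compact and that $\hat{z}$ vanishes at infinity in $L^{p}([0,1]:X)$ requires a little care, as does the translation from the pointwise Lipschitz bound $L_{f}$ in (iv) (governing contractivity via $A_{n}$) to the Stepanov-type Lipschitz bound $L_{g}$ in (i) (governing the composition lemma). Once these compatibility checks are in place, the remaining steps are formal applications of the cited lemmas.
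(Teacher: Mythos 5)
Your proposal is correct and, for the existence part, follows essentially the same route as the paper: the composition principle of Lemma \ref{bibl} applied to the decomposition $x=y+z$ of an asymptotically almost periodic candidate (the paper justifies the compactness hypotheses by citing the fact, from \cite{zhang-c-prim}, that the range of an $X$-valued asymptotically almost periodic function is relatively compact, exactly the bookkeeping point you flag), then Lemma \ref{andrade-wer} under hypothesis (iii) for the convolution term, the decay $\lim_{t\rightarrow+\infty}S_{\gamma}(t)x_{0}=0$ for the homogeneous term, and finally \eqref{storn-vuk} with $A_{n}<1$ and the extension of the Banach contraction principle to conclude. Where you genuinely diverge is the uniqueness step. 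The paper does not reuse the contraction estimate: it takes two arbitrary mild solutions $u,v$, applies \eqref{debil} to get
\begin{align*}
\| u(t)-v(t) \| \leq M_{1}\int^{t}_{0}\bigl(t-s\bigr)^{\gamma \beta -1}L_{f}(s)\| u(s)-v(s) \|\, ds,\quad t\geq 0,
\end{align*}
and invokes the singular Gronwall inequality \cite[Lemma 6.19, p.~111]{Diet}, which yields uniqueness in the full class $C([0,\infty):X)$ of mild solutions, with no boundedness assumption. Your argument instead deduces uniqueness from $\|u-v\|_{\infty}=\|\Upsilon^{n}u-\Upsilon^{n}v\|_{\infty}\leq A_{n}\|u-v\|_{\infty}$, which is valid but only where \eqref{storn-vuk} is available, i.e., within $BUC([0,\infty):X)$ (and note that a merely bounded continuous mild solution is not a priori uniformly continuous, so even your extension to ``all bounded mild solutions'' needs the remark that the derivation of \eqref{storn-vuk} uses only sup norms). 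Since the theorem asserts uniqueness of the \emph{asymptotically almost periodic} solution, and $AAP([0,\infty):X)\subseteq BUC([0,\infty):X)$, your version suffices for the statement as phrased; the paper's Gronwall route buys the strictly stronger conclusion that the AAP solution is the only mild solution whatsoever.
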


\begin{proof}
Due to (i)-(ii) and Lemma \ref{bibl}, we get that for each $x\in AAP(I:X)$ one has $f(\cdot, x(\cdot)) \in AAPS^{q}(I : X),$ where $q=pr/p+r;$ here we would like to recall only that the range of an $X$-valued asymptotically almost periodic function is relatively compact in $X$ by \cite[Theorem 2.4]{zhang-c-prim}.
Owing to the assumption (iii), Lemma \ref{andrade-wer} and the obvious equality $\lim_{t\rightarrow +\infty}S_{\gamma}(t)x_{0}=0$, we get that the mapping $
\Upsilon : AAP(X) \rightarrow AAP(X)$ is well-defined.
Using now \eqref{storn-vuk}, (iv) and a well-known extension of the Banach contraction principle, we obtain the existence of an asymptotically almost periodic solution of inclusion (DFP)$_{f,\gamma,s}.$ To prove the uniqueness of mild solutions, assume that
$u(\cdot)
$ and $
v(\cdot)
$ are two mild solutions of  inclusion (DFP)$_{f,\gamma,s}.$ Then \eqref{debil} yields that
\begin{align*}
\| u(t)-v(t) \| & \leq M_{1}\int^{t}_{0}\bigl(t-s\bigr)^{\gamma \beta -1}L_{f}(s)\| u(s)-v(s) \|\, ds,
\quad t\geq 0.
\end{align*}
By \cite[Lemma 6.19, p. 111]{Diet}, we get that $ u(s)=v(s)$  for all $s\in [ 0,t]$  ($t>0$ fixed). This completes the proof of theorem.
\end{proof}

If we employ Lemma \ref{bibl-frimaonji} in place of Lemma \ref{bibl}, then we are in a position to formulate and prove the following analogue of Theorem \ref{stepa-vuk} in the case of consideration of classical Lipschitz condition \eqref{vbnmp-frim}:

\begin{thm}\label{stepa-vuk-frimex}
Let $I =[0,\infty),$ and let $p>1.$
Suppose that the following conditions hold:
\begin{itemize}
\item[(i)] $g \in APS^{p}(I \times X : X)  $ with  $p \geq 1, $ and there exists a constant $L>0$ such that \eqref{vbnmp-frim} holds with the function $f(\cdot, \cdot)  $ replaced by the function $g(\cdot, \cdot)  $ therein.
\item[(ii)] $f(t,x)=g(t,x)+q(t,x)$ for all $t\geq 0$ and $x\in X,$ where $\hat{q}\in C_{0}(I \times X : L^{p}([0,1]:X)).$
\item[(iii)] $
\frac{p}{p-1}(\gamma \beta -1)>-1.$
\item[(iv)] 
\emph{\eqref{vbnmp}} holds for a.e. $t>0$, with  locally bounded positive function $L_{f}(\cdot)$ satisfying
$A_{n}<1$ for some $n\in {\mathbb N}.$
\end{itemize}
Then there exists a unique asymptotically almost periodic solution of inclusion $\emph{(DFP)}_{f,\gamma,s}.$
\end{thm}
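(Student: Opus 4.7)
The plan is to mirror the proof of Theorem \ref{stepa-vuk}, substituting Lemma \ref{bibl-frimaonji} for Lemma \ref{bibl} to handle the classical Lipschitz condition \eqref{vbnmp-frim} on $g(\cdot,\cdot)$ in $APS^{p}$. Fix any $x\in AAP([0,\infty):X)$; by \cite[Theorem 2.4]{zhang-c-prim} its range is relatively compact in $X$, so the hypotheses (ii), (iv) and (v) of Lemma \ref{bibl-frimaonji} are satisfied once we decompose $x=y+z$ with $y$ almost periodic and $z\in C_{0}$ (then $\hat{z}\in C_{0}([0,\infty):L^{p}([0,1]:X))$ automatically, and $y$ is trivially $S^{p}$-almost periodic with relatively compact range). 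Combined with assumptions (i) and (ii) of the theorem, Lemma \ref{bibl-frimaonji} yields $f(\cdot,x(\cdot))\in AAPS^{p}([0,\infty):X)$.

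Next, observe that in the notation of Lemma \ref{andrade-wer}, taking the exponent pair $(q,q')=(p,p/(p-1))$, hypothesis (iii) here is exactly the condition $q'(\gamma\beta-1)>-1$ required there. Therefore Lemma \ref{andrade-wer} applies to the convolution $\int_{0}^{t}R_{\gamma}(t-s)f(s,x(s))\,ds$ and places it in $AAP([0,\infty):X)$. Since $\|S_{\gamma}(t)x_{0}\|\to 0$ as $t\to\infty$ by Lemma \ref{nmb}, the first summand $S_{\gamma}(t)x_{0}$ lies in $C_{0}([0,\infty):X)\subseteq AAP([0,\infty):X)$. Hence $\Upsilon$ maps $AAP([0,\infty):X)$ into itself.

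Now invoke hypothesis (iv): the pointwise Lipschitz bound \eqref{vbnmp} for $f(\cdot,\cdot)$ with locally bounded $L_{f}(\cdot)$ produces estimate \eqref{storn-vuk}, so some iterate $\Upsilon^{n}$ is a strict contraction on the Banach space $AAP([0,\infty):X)$ (equipped with the sup-norm, which is a closed subspace of $BUC([0,\infty):X)$). The extension of the Banach contraction principle to iterated contractions then furnishes a unique fixed point $u\in AAP([0,\infty):X)$, i.e.\ a unique asymptotically almost periodic mild solution.

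For uniqueness among \emph{all} mild solutions (not merely the a.a.p.\ ones), the argument repeats verbatim that of Theorem \ref{stepa-vuk}: if $u,v$ are two mild solutions, then from \eqref{debil} and \eqref{vbnmp}
\begin{align*}
\|u(t)-v(t)\|\leq M_{1}\int_{0}^{t}(t-s)^{\gamma\beta-1}L_{f}(s)\|u(s)-v(s)\|\,ds,\quad t\geq 0,
\end{align*}
and Gronwall's inequality for weakly singular kernels \cite[Lemma 6.19, p.~111]{Diet} forces $u\equiv v$ on every compact interval, hence on $[0,\infty)$. The only subtle point of the whole argument is verifying the applicability of Lemma \ref{bibl-frimaonji} to a generic $x\in AAP([0,\infty):X)$; once one notes that asymptotic almost periodicity gives the required decomposition and the relative compactness of the range, the remainder is mechanical.
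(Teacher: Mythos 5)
Your proposal is correct and follows exactly the route the paper intends: the paper gives no separate proof of Theorem \ref{stepa-vuk-frimex}, stating only that one repeats the proof of Theorem \ref{stepa-vuk} with Lemma \ref{bibl-frimaonji} in place of Lemma \ref{bibl}, which is precisely what you carry out (with the correct identification $(q,q')=(p,p/(p-1))$ so that hypothesis (iii) matches the condition of Lemma \ref{andrade-wer}, the same use of \cite[Theorem 2.4]{zhang-c-prim} for relative compactness of the range, the same contraction argument via \eqref{storn-vuk}, and the same uniqueness step via \cite[Lemma 6.19, p.~111]{Diet}). Your added verifications --- the decomposition $x=y+z$ feeding hypotheses (ii), (iv), (v) of Lemma \ref{bibl-frimaonji}, and the closedness of $AAP([0,\infty):X)$ in $BUC([0,\infty):X)$ --- are accurate elaborations of details the paper leaves implicit.
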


Since any function $g \in AAP(I \times X : X)  $ 
belongs to the class
 $AAPS^{p}(I \times X : X)  $ for all $p>1$, and 
$$
\lim_{p\rightarrow +\infty} \frac{p}{p-1}(\gamma \beta -1)=\gamma \beta -1>-1,
$$
Theorem \ref{stepa-vuk-frimex} immediately implies
the following important corollary:

\begin{cor}\label{kretinjo-vuk}
Suppose that $I=[0,\infty),$ the function $f(\cdot,\cdot)$ is asymptotically almost periodic and \emph{\eqref{vbnmp}} holds for a.e. $t>0$, with  locally bounded positive function $L_{f}(\cdot)$ satisfying
$A_{n}<1$ for some $n\in {\mathbb N}.$ Then there exists a unique asymptotically almost periodic solution of inclusion $\emph{(DFP)}_{f,\gamma,s}.$
\end{cor}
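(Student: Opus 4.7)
The plan is to reduce the statement to Theorem \ref{stepa-vuk-frimex} by choosing the integrability exponent $p$ large enough that its condition (iii) becomes compatible with the standing bound $\gamma\beta>0$, and by bridging a small mismatch in the Lipschitz hypothesis via the direct two-parameter composition principle from the introduction.

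First I would decompose the a.a.p.\ function as $f=g+q$ with $g\in AP(\mathbb R\times X:X)$ and $q\in C_{0}([0,\infty)\times X:X)$, as granted by Definition \ref{definicija}(ii). Since $g$ and $q$ are continuous and bounded, one directly checks that $g\in APS^{p}(I\times X:X)$ and $\hat{q}\in C_{0}(I\times X:L^{p}([0,1]:X))$ for every $p\geq 1$. Next, since $\gamma\beta>0$ we have $\gamma\beta-1>-1$, and $\frac{p}{p-1}(\gamma\beta-1)\to\gamma\beta-1$ as $p\to\infty$; hence we may fix $p>1$ so large that $\frac{p}{p-1}(\gamma\beta-1)>-1$. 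This supplies conditions (ii) and (iii) of Theorem \ref{stepa-vuk-frimex}, and condition (iv) is identical with the hypothesis of the corollary.

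The only remaining point is condition (i) of Theorem \ref{stepa-vuk-frimex}, which asks for a \emph{global} Lipschitz estimate on $g$; here the corollary only supplies the local bound \eqref{vbnmp} for $f$. This condition is, however, used in the proof of Theorem \ref{stepa-vuk-frimex} solely through Lemma \ref{bibl-frimaonji}, in order to secure that $f(\cdot,x(\cdot))\in AAPS^{p}(I:X)$ for every $x\in AAP(I:X)$. In our setting the same conclusion is available for free: by the two-parameter composition property recorded as item (ii) after Definition \ref{definicija}, $f\in AAP([0,\infty)\times X:X)$ and $x\in AAP([0,\infty):X)$ imply $f(\cdot,x(\cdot))\in AAP([0,\infty):X)\subseteq AAPS^{p}([0,\infty):X)$. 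This directly replaces the invocation of Lemma \ref{bibl-frimaonji}, so the mismatch is harmless.

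The remainder is routine. Lemma \ref{andrade-wer} combined with our choice of $p$ shows that $t\mapsto\int_{0}^{t}R_{\gamma}(t-s)f(s,x(s))\,ds$ lies in $AAP([0,\infty):X)$, while Lemma \ref{nmb} yields $S_{\gamma}(\cdot)x_{0}\in C_{0}\subseteq AAP$; hence $\Upsilon$ maps $AAP([0,\infty):X)$ into itself. Estimate \eqref{storn-vuk} together with $A_{n}<1$ makes $\Upsilon^{n}$ a strict contraction on $BUC([0,\infty):X)$, and since $AAP([0,\infty):X)$ is a closed subspace there, the standard extension of the Banach contraction principle delivers the unique fixed point in $AAP([0,\infty):X)$. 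Uniqueness among all mild solutions is then obtained exactly as in the proof of Theorem \ref{stepa-vuk}, from \eqref{debil} and the Gronwall-type \cite[Lemma 6.19, p.~111]{Diet}. The main thing to watch is the Lipschitz bookkeeping in the third paragraph; once that is handled, the rest amounts to a citation of Theorem \ref{stepa-vuk-frimex}.
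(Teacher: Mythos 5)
Your reduction is exactly the paper's: the published derivation of Corollary \ref{kretinjo-vuk} consists of nothing more than the remark that any function in $AAP(I\times X:X)$ lies in $AAPS^{p}(I\times X:X)$ for every $p>1$, together with $\lim_{p\rightarrow+\infty}\frac{p}{p-1}(\gamma\beta-1)=\gamma\beta-1>-1$, after which Theorem \ref{stepa-vuk-frimex} is invoked with $p$ chosen large --- precisely your first two paragraphs. Where you genuinely depart from the paper is your third paragraph, and this is the valuable part: you observe that hypothesis (i) of Theorem \ref{stepa-vuk-frimex} demands a \emph{global} Lipschitz constant for the principal term $g$ of the decomposition $f=g+q$, which the corollary's hypothesis (only \eqref{vbnmp} with a locally bounded $L_{f}$) does not literally supply, and you repair this by bypassing Lemma \ref{bibl-frimaonji} entirely: since $f\in AAP([0,\infty)\times X:X)$ and $x\in AAP([0,\infty):X)$, the composition fact (ii) recorded after Definition \ref{definicija} (from \cite{zhang-c-prim}) already yields $f(\cdot,x(\cdot))\in AAP([0,\infty):X)\subseteq AAPS^{p}([0,\infty):X)$, which is the only output that the proof of Theorem \ref{stepa-vuk-frimex} extracts from its condition (i). The paper's ``immediately implies'' glosses over this mismatch, so your patch is not redundant; it is the missing verification, and it is also the cleaner route, since for a nonlinearity that is honestly asymptotically almost periodic the Stepanov composition machinery is unnecessary. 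Your closing steps (Lemma \ref{andrade-wer} with the chosen exponent, decay of $S_{\gamma}(\cdot)x_{0}$ via Lemma \ref{nmb}, the iterated contraction \eqref{storn-vuk} with $A_{n}<1$ on the closed subspace $AAP([0,\infty):X)$ of $BUC([0,\infty):X)$, and uniqueness via \eqref{debil} and \cite[Lemma 6.19, p.~111]{Diet}) reproduce the proof of Theorem \ref{stepa-vuk} verbatim. One detail worth making explicit: continuity of $\Upsilon x$ at $t=0$, and hence the inclusion $S_{\gamma}(\cdot)x_{0}\in C_{0}([0,\infty):X)$ you use, relies on the subsection's standing assumption that $x_{0}$ belongs to the domain of continuity of $(S_{\gamma}(t))_{t>0}$.
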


It is not trivial to state a satisfactory criterion which would enable one to see that there exists an integer $n\in {\mathbb N}$ such that
$A_{n}<1.$ On the other hand, a very simple calculation involving the estimates \eqref{debil} and \eqref{debil-prim} shows that 
$$
A_{1}\leq L\Biggl[\frac{M_{1}}{\gamma \beta}+\frac{M_{2}}{\gamma}\Biggr],
$$
provided the requirements of Corollary \ref{kretinjo-vuk}.
Hence, we have the following:

\begin{cor}\label{kret-vuk}
Suppose that $I=[0,\infty),$ the function $f(\cdot,\cdot)$ is asymptotically almost periodic and \emph{\eqref{vbnmp-frim}} holds for some $L\in [ 0, (\frac{M_{1}}{\gamma \beta}+\frac{M_{2}}{\gamma})^{-1} ).$ Then there exists a unique asymptotically almost periodic solution of inclusion $\emph{(DFP)}_{f,\gamma,s}.$
\end{cor}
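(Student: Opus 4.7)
The plan is to reduce this to Corollary~\ref{kretinjo-vuk} applied with the exponent $n=1$. Under the uniform Lipschitz hypothesis \eqref{vbnmp-frim}, the majorant $L_f(t)\equiv L$ is trivially locally bounded, so the only nontrivial condition of that corollary I would have to verify is $A_1<1$.

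To estimate $A_1$, I would bound $\int_0^\infty \|R_\gamma(u)\|\,du$ by splitting it at $u=1$. On $[0,1]$, combining the identity $R_\gamma(u)=u^{\gamma-1}P_\gamma(u)$ with the short-time bound \eqref{debil} gives $\|R_\gamma(u)\|\leq M_1 u^{\gamma\beta-1}$, whose integral on $[0,1]$ equals $M_1/(\gamma\beta)$. On $[1,\infty)$, the long-time bound from Lemma~\ref{nmb} yields $\|R_\gamma(u)\|\leq M_2 u^{-\gamma-1}$, whose integral equals $M_2/\gamma$. Since $L_f\equiv L$, this gives
\[
A_1 \;=\; L\,\sup_{t\geq 0}\int_0^t \|R_\gamma(t-s)\|\,ds \;\leq\; L\Bigl[\frac{M_1}{\gamma\beta}+\frac{M_2}{\gamma}\Bigr],
\]
which is exactly the estimate already announced in the paragraph preceding the corollary.

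The smallness hypothesis $L<\bigl(M_1/(\gamma\beta)+M_2/\gamma\bigr)^{-1}$ then forces $A_1<1$, and invoking Corollary~\ref{kretinjo-vuk} with $n=1$ produces the unique asymptotically almost periodic mild solution of (DFP)$_{f,\gamma,s}$. There is really no substantive obstacle here: the two operator-norm bounds on $R_\gamma$ used in the split have already been proved in \eqref{debil} and Lemma~\ref{nmb}, and the contraction mechanism is fully packaged in Corollary~\ref{kretinjo-vuk}, so the entire argument amounts to specializing the criterion $A_n<1$ to the elementary case $n=1$ via the explicit bound above.
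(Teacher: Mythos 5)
Your proposal is correct and follows essentially the same route as the paper: the paper's own justification is precisely the displayed estimate $A_{1}\leq L[\frac{M_{1}}{\gamma \beta}+\frac{M_{2}}{\gamma}]$ obtained from \eqref{debil} and \eqref{debil-prim} (Lemma \ref{nmb}), followed by the specialization of Corollary \ref{kretinjo-vuk} to $n=1$ with $L_{f}\equiv L$. Your splitting of $\int_{0}^{\infty}\|R_{\gamma}(u)\|\,du$ at $u=1$ just makes explicit the ``very simple calculation'' the paper leaves to the reader.
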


Now we would like to present the following illustrative example (see \cite{hjms} for more details).

\begin{example}\label{merkle-nedegenerisane-vuk} 
\begin{itemize} 
\item[(i)] (von Wahl \cite{wolf}) We can simply incorporate Theorem \ref{stepa-vuk}-Theorem \ref{stepa-vuk-frimex} in the analysis of existence and uniqueness of asymptotically almost periodic solutions of the following 
fractional semilinear equation with higher order differential operators in the H\"older space $X=C^{\alpha}(\overline{\Omega}):$
\[\left\{
\begin{array}{l}
{\mathbf D}_{t}^{\gamma}u(t,x)=-\sum \limits_{|\beta|\leq 2m}a_{\beta}(t,x)D^{\beta}u(t,x)-\sigma u(t,x) +f(t,u(t,x)),\ t\geq 0,\ x\in {\Omega};\\
u(0,x)=u_{0}(x),\quad x\in {\Omega},
\end{array}
\right.
\]
where $\alpha\in (0,1),$ $m\in {\mathbb N},$
$\Omega$ is a bounded domain in ${{\mathbb R}^{n}}$ with boundary of
class $C^{4m}$, 
$D^{\beta}=\prod_{i=1}^{n}(\frac{1}{i}\frac{\partial}{\partial
x_{i}})^{\beta_{i}},$ the functions $a_{\beta} : \overline{\Omega} \rightarrow
{\mathbb C}$ satisfy certain conditions and $\sigma>0$ is sufficiently large. 
Then the condition (P) holds with the exponent $\beta=1-\frac{\alpha}{2m}$ and this value is known to be sharp.
\item[(ii)] (Favini, Yagi \cite[Example 3.6]{faviniyagi}) 
Concerning degenerate semilinear fractional differential equations, it is clear that the possible applications of Theorem \ref{stepa-vuk}-Theorem \ref{stepa-vuk-frimex} can be given in the analysis of existence and uniqueness of asymptotically almost periodic solutions of the following 
fractional Poisson semilinear heat equation in the Lebesgue space $X=L^{p}(\Omega):$
\[\left\{
\begin{array}{l}
{\mathbf D}_{t}^{\gamma}[m(x)v(t,x)]=(\Delta -b )v(t,x) +f(t,m(x)v(t,x)),\ t\geq 0,\ x\in {\Omega};\\
v(t,x)=0,\quad (t,x)\in [0,\infty) \times \partial \Omega ,\\
 m(x)v(0,x)=u_{0}(x),\quad x\in {\Omega},
\end{array}
\right.
\]
where $\Omega$ is a bounded domain in ${\mathbb R}^{n},$ $b>0,$ $m(x)\geq 0$ a.e. $x\in \Omega$, $m\in L^{\infty}(\Omega),$ $1<p<\infty$ and the operator $\Delta -b $ acts on $X$ with the Dirichlet boundary conditions.
\end{itemize}
\end{example}

\subsection{The non-analyticity of semigroup $(T(t))_{t>0}$}\label{urban}

It is not difficult to observe that the analyticity of degenerate semigroup $(T(t))_{t>0}$ examined in the previous subsection is a slightly redundant assumption.
In this subsection, we investigate the case in which the operator $C\in L(X)$ is injective and $(T(t))_{t\geq 0}\subseteq L(X)$ is a $C$-regularized semigroup, i.e, the mapping $t\mapsto T(t)x,$ $t\geq 0$ is continuous for every fixed element $x\in X,$ $T(0)=C$ and $T(t+s)C=T(t)T(s)$ for all $t,\ s\geq 0.$ The multivalued linear operator 
$$
{\mathcal A}:=\Biggl\{ (x,y)\in X\times X : T(t)x-Cx=\int^{t}_{0}T(s)y\, ds \mbox{ for all }
t\geq 0 \Biggr\}
$$
is said to be the integral generator of $(T(t))_{t\geq 0};$ see \cite{knjigah}-\cite{FKP} for more details on the subject. Let $\|T(t)\|=O(e^{ct}),$ $t\geq 0$ for some negative constant $c<0.$ Then we know that $\{\lambda \in {\mathbb C} : \Re \lambda>c\}\subseteq \rho_{C}({\mathcal A})$ as well as $\int^{\infty}_{0}e^{-\lambda t}T(t)x\, dt = (\lambda-{\mathcal A})^{-1}C,$ $x\in X,$ $\Re \lambda>c$ (\cite{FKP}).

Define the operator families $S_{\gamma}(\cdot),$ $P_{\gamma}(\cdot)$ and $R_{\gamma}(\cdot)$ as well as the sequence $(A_{n})_{n\in {\mathbb N}}$ and the operator $\Upsilon (\cdot)$ as in the previous subsection. 
Then the operator ${\mathcal A}=C^{-1}{\mathcal A}C$ is the integral generator of an exponentially bounded $(g_{\gamma},C)$-regularized resolvent family $(S_{\gamma}(t))_{t\geq 0}$ (cf. \cite[Definition 3.2.2]{FKP} for the notion), so that $
{\mathcal A}$ is closed,
$(S_{\gamma}(t))_{t\geq 0}$ is strongly continuous for $t\geq 0,$
$S_{\gamma}(t)x-Cx\in {\mathcal A}\int^{t}_{0}g_{\gamma}(t-s)S_{\gamma}(s)x\, ds,$ $t\geq 0,$ $x\in X;$ $S_{\gamma}(t)x-Cx=\int^{t}_{0}g_{\gamma}(t-s)S_{\gamma}(s)y\, ds,$ $t\geq 0,$ whenever $y\in {\mathcal A}x;$
$S_{\gamma}(t){\mathcal A} \subseteq {\mathcal A}S_{\gamma}(t),$ $t\geq 0$ and $S_{\gamma}(t)C=CS_{\gamma}(t),$ $t\geq 0.$

In this subsection, we will employ the classical definition of Caputo fractional derivatives of order $\gamma \in (0,1)$. 
The Caputo fractional derivative\index{fractional derivatives!Caputo}
${\mathbf D}_{t}^{\gamma}u(t)$ is defined for those continuous functions
$u :[0,\infty) \rightarrow X$ for which 
$g_{1-\gamma}\ast (u(\cdot)-u(0)) \in C^{1}([0,\infty) : X),$
by
$$
{\mathbf
D}_{t}^{\gamma}u(t)=\frac{d}{dt}\Biggl[g_{1-\gamma}
\ast \Bigl(u(\cdot)-u(0)\Bigr)\Biggr](t),\quad t\geq 0.
$$

We continue by observing that the arguments contained in the proof of Lemma \ref{nmb} show that there exist two finite constants $M_{3}>0$ and $M_{4}>0$ such that
\begin{align}\label{debil-vuk}
\bigl\| S_{\gamma}(t) \bigr\|+\bigl\| P_{\gamma}(t) \bigr\|\leq M_{3},\quad t>0
\end{align}
as well as
\begin{align}\label{debil-prim-vuk}
\bigl\| S_{\gamma}(t) \bigr\|\leq M_{4}t^{-\gamma},\ t\geq 1 \ \ \mbox{  and  }\ \ \bigl\| P_{\gamma}(t) \bigr\|\leq M_{4}t^{-2\gamma },\ t\geq 1.
\end{align}

\begin{defn}\label{grouna}
Let $x_{0}\in R(C),$ and let $f: [0,\infty) \rightarrow X$ be continuous. Then we say that a continuous function $u: [0,\infty) \rightarrow X$ is a classical solution of (DFP)$_{f,\gamma}$
iff $
{\mathbf
D}_{t}^{\gamma}u(t)$ is well-defined and continuous for $t\geq 0$ as well as (DFP)$_{f,\gamma}$ holds identically for $t\geq 0.$
\end{defn}

Now we state the following result:

\begin{thm}\label{mambas-MLO-poyzd}
Suppose that $C^{-1}x_{0}\in D({\mathcal A})$ and $C^{-1}f \in W^{1,1}_{loc}((0,\infty) : X).$ 
Then there exists a unique classical solution $u(\cdot)$ of problem $\emph{(DFP)}_{f,\gamma}.$ If, additionally, $f\in AAPS^{q}([0,\infty) : X)$ with some $q\in (1,\infty),$ $1/q+1/q'=1$ and $q'(\gamma -1)>-1,$
then  $u\in AAP([0,\infty) : X).$
\end{thm}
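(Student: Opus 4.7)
The plan is to construct the unique classical solution explicitly via the variation of parameters formula associated with the $(g_{\gamma},C)$-regularized resolvent family $(S_{\gamma}(t))_{t\ge 0}$, and then to extract the asymptotic almost periodicity by applying Lemma~\ref{andrade-wer} and Remark~\ref{andrade-wer-remark} with the role of the exponent $\beta$ played by $1$. The natural candidate is
\[
u(t) = S_{\gamma}(t)C^{-1}x_{0} + \int_{0}^{t} R_{\gamma}(t-s)C^{-1}f(s)\,ds, \qquad t\ge 0,
\]
with $R_{\gamma}(t)=t^{\gamma-1}P_{\gamma}(t)$. First I would check that $t\mapsto S_{\gamma}(t)C^{-1}x_{0}$ is a classical solution of the homogeneous inclusion with $u(0)=x_{0}$, which is immediate from the defining identity $S_{\gamma}(t)C^{-1}x_{0}-x_{0}=\int_{0}^{t}g_{\gamma}(t-s)S_{\gamma}(s)y\,ds$ (valid for any $y\in{\mathcal A}(C^{-1}x_{0})$) together with $S_{\gamma}(0)=C$. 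Next I would show that the convolution piece is continuous on $[0,\infty)$, Caputo-differentiable in the sense of the definition adopted in this subsection, and satisfies ${\mathbf D}_{t}^{\gamma}v(t)\in{\mathcal A}v(t)+f(t)$; here the hypothesis $C^{-1}f\in W^{1,1}_{loc}((0,\infty):X)$ is exactly the regularity needed to differentiate $g_{1-\gamma}\ast v$ once and recover $f$ via the $(g_{\gamma},C)$-resolvent equation, along the lines of \cite[Section~3.5]{FKP}. Uniqueness follows by applying $C^{-1}$ to the difference of two solutions and invoking a Gronwall-type estimate based on the uniform bound \eqref{debil-vuk}.

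For the asymptotic almost periodicity I would treat the two terms separately. The bound $\|S_{\gamma}(t)\|\le M_{4}t^{-\gamma}$ for $t\ge 1$ from \eqref{debil-prim-vuk} immediately places $t\mapsto S_{\gamma}(t)C^{-1}x_{0}$ in $C_{0}([0,\infty):X)\subseteq AAP([0,\infty):X)$. For the convolution piece I would invoke Remark~\ref{andrade-wer-remark}: the combination of \eqref{debil-vuk} and \eqref{debil-prim-vuk} yields $\|R_{\gamma}(t)\|\le M_{3}t^{\gamma-1}$ on $(0,1]$ and $\|R_{\gamma}(t)\|\le M_{4}t^{-\gamma-1}$ on $[1,\infty)$, which are precisely the growth rates used in Lemma~\ref{andrade-wer} with the exponent $\beta$ replaced by $1$. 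Under this identification the stated condition $q'(\gamma-1)>-1$ is exactly the condition $q'(\gamma\beta-1)>-1$ required there, so Lemma~\ref{andrade-wer} delivers the claimed AAP conclusion.

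The main technical obstacle I anticipate is handling the (generally unbounded) inverse $C^{-1}$ inside the convolution: since $f\in AAPS^{q}$ does not automatically imply $C^{-1}f\in AAPS^{q}$, one cannot feed $C^{-1}f$ directly into Lemma~\ref{andrade-wer}. The clean workaround is to exploit the commutation $R_{\gamma}(t)C=CR_{\gamma}(t)$ (a consequence of $S_{\gamma}(t)C=CS_{\gamma}(t)$) to rewrite the integral as $C^{-1}\int_{0}^{t}R_{\gamma}(t-s)f(s)\,ds$, apply Lemma~\ref{andrade-wer} to the inner integral using only the hypothesis on $f$, and then transfer the AAP property through $C^{-1}$ by closedness, combined with the standard decomposition of an asymptotically almost periodic function into an almost periodic part plus a $C_{0}$-part. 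The step requiring most care is verifying that both parts of this decomposition remain in $R(C)$ so that $C^{-1}$ may be applied consistently and the conclusion $u\in AAP([0,\infty):X)$ survives.
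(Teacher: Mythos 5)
Your construction follows the paper's proof in all essentials: the same candidate $u(t)=S_{\gamma}(t)C^{-1}x_{0}+\int^{t}_{0}R_{\gamma}(t-s)C^{-1}f(s)\,ds$, the identity $S_{\gamma}(t)y_{0}-Cy_{0}=\int^{t}_{0}g_{\gamma}(t-s)S_{\gamma}(s)z_{0}\,ds$ (for $y_{0}=C^{-1}x_{0}$, $z_{0}\in{\mathcal A}y_{0}$) for the homogeneous part, the hypothesis $C^{-1}f\in W^{1,1}_{loc}$ used precisely to differentiate the convolution (the paper implements this through the Laplace-transform identity $\int^{\infty}_{0}e^{-\lambda t}R_{\gamma}(t)x\,dt=(\lambda^{\gamma}-{\mathcal A})^{-1}Cx$, whence $(g_{1-\gamma}\ast R_{\gamma}(\cdot)x)(t)=S_{\gamma}(t)x$, together with \cite[Proposition 1.3.6]{a43} and the closedness Lemma \ref{integracija-tricky}), and finally Lemma \ref{andrade-wer}/Remark \ref{andrade-wer-remark} with the exponent $\beta$ replaced by $1$, so that $q'(\gamma-1)>-1$ is indeed the specialization of $q'(\gamma\beta-1)>-1$. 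However, your uniqueness step would fail as described. The difference $w$ of two classical solutions satisfies ${\mathbf D}_{t}^{\gamma}w(t)\in{\mathcal A}w(t)$, $w(0)=0$, and since ${\mathcal A}$ is an unbounded multivalued operator there is no inequality of the form $\|{\mathbf D}_{t}^{\gamma}w(t)\|\leq L\|w(t)\|$ to feed into a Gronwall argument; the bound \eqref{debil-vuk} controls $S_{\gamma}$ and $P_{\gamma}$, not selections of ${\mathcal A}$. (Gronwall is what the paper uses for the \emph{semilinear} problem in Theorem \ref{stepa-vuk}, where a Lipschitz function $L_{f}$ is assumed.) The argument the paper actually invokes is the standard uniqueness theorem for generators of global $(g_{\gamma},C)$-regularized resolvent families: convolve $S_{\gamma}$ against the homogeneous equation, use $S_{\gamma}(t){\mathcal A}\subseteq{\mathcal A}S_{\gamma}(t)$ and Lemma \ref{integracija-tricky} to conclude $Cw\equiv 0$, then use injectivity of $C$.

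On the asymptotic half you have correctly spotted a genuine subtlety --- $f\in AAPS^{q}$ does not imply $C^{-1}f\in AAPS^{q}$ --- but your proposed repair does not work. Writing the convolution as $C^{-1}\int^{t}_{0}R_{\gamma}(t-s)f(s)\,ds$ and ``transferring the AAP property through $C^{-1}$ by closedness'' is hopeless: $C^{-1}$ is closed but in general unbounded, and unbounded closed operators preserve neither almost periodicity nor vanishing at infinity (the $\epsilon$-period estimate $\|G(t+\tau)-G(t)\|\leq\epsilon$ yields no information about $\|C^{-1}G(t+\tau)-C^{-1}G(t)\|$); verifying that both parts of the decomposition $f=g+\phi$ lie in $R(C)$ does not remedy this, since $C^{-1}$ restricted to $R(C)$ is still unbounded. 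The paper's own proof simply applies Lemma \ref{andrade-wer} to $F_{\gamma}(t)=\int^{t}_{0}R_{\gamma}(t-s)C^{-1}f(s)\,ds$, i.e., it tacitly reads the Stepanov hypothesis as a hypothesis on $C^{-1}f$; this reading is consistent with Theorems \ref{stepa-vukk} and \ref{stepa-vuk-frimexx} and Corollaries \ref{kretinjo-vukk} and \ref{kret-vukk}, where every such condition is imposed explicitly on $C^{-1}f(\cdot,\cdot)$. So the correct move at this point is not a commutation trick but the assumption $C^{-1}f\in AAPS^{q}([0,\infty):X)$, after which your application of Lemma \ref{andrade-wer} with the rates $\|R_{\gamma}(t)\|\leq M_{3}t^{\gamma-1}$ on $(0,1]$ and $\|R_{\gamma}(t)\|\leq M_{4}t^{-\gamma-1}$ on $[1,\infty)$, together with $\lim_{t\rightarrow\infty}S_{\gamma}(t)C^{-1}x_{0}=0$ from \eqref{debil-prim-vuk}, closes the proof exactly as in the paper.
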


\begin{proof} 
The uniqueness of classical solutions is a simple consequence of the fact that 
${\mathcal A}$ generates a global $(g_{\gamma},C)$-regularized resolvent family (\cite{FKP}). To prove the existence of classical solutions, we first observe that
the argumentation contained in the proof of \cite[Theorem 5]{warma} (see also \cite{iasi}) shows that $\int^{\infty}_{0}e^{-\lambda t}R_{\gamma}(t)x\, dt = (\lambda^{\gamma}-{\mathcal A})^{-1}C,$ $x\in X,$ $\lambda>0.$ By the uniqueness theorem for Laplace transform, we get that
\begin{align}\label{milanche}
\bigl(g_{1-\gamma} \ast R_{\gamma}(\cdot)x\bigr)(t)=S_{\gamma}(t)x,\quad t\geq 0,\ x\in X.
\end{align}  
Let $C^{-1}x_{0}=y_{0}$ and $z_{0}\in {\mathcal A}y_{0}.$ Then $S_{\gamma}(t)y_{0}-Cy_{0}=\int^{t}_{0}g_{\gamma}(t-s)S_{\gamma}(s)z_{0}\, ds,$ $t\geq 0$ and, on account of this, it is almost trivial to verify that $
{\mathbf
D}_{t}^{\gamma}S_{\gamma}(t)C^{-1}x_{0}=S_{\gamma}(t)z_{0},$ $t\geq 0.$ Set $F_{\gamma}(t):=\int^{t}_{0}R_{\gamma}(t-s)C^{-1}f(s)\, ds,$ $t\geq 0$ and $u(t):=S_{\gamma}(t)C^{-1}x_{0}+F_{\gamma}(t),$ $t\geq 0.$
Using the proof of \cite[Proposition 1.3.6]{a43}, we get that
\begin{align}\label{milanche-krki}
F_{\gamma}^{\prime}(t)=\bigl( R_{\gamma} \ast (C^{-1}f)^{\prime}\bigr)(t)+R_{\gamma}(t)(C^{-1}f)(0),\quad t> 0,\ x\in X,
\end{align}  
which simply implies that $
{\mathbf
D}_{t}^{\gamma}F_{\gamma}(t)=(g_{1-\gamma} \ast F_{\gamma}^{\prime})(t),$ $t\geq 0.$ By the foregoing, it suffices to show that
\begin{align*}
\bigl(g_{1-\gamma} \ast F_{\gamma}^{\prime}\bigr)(t) -f(t)\in {\mathcal A} \bigl[ R_{\gamma} \ast C^{-1}f\bigr](t),\quad t\geq 0,
\end{align*}
i.e., by \eqref{milanche}-\eqref{milanche-krki},
\begin{align*}
\bigl(S_{\gamma} & \ast (C^{-1}f)^{\prime}\bigr)(t)+S_{\gamma}(t)C^{-1}f(0)-f(t)
\\ &=\bigl(S_{\gamma} \ast C^{-1}f\bigr)^{\prime}(t)-f(t)\in {\mathcal A} \bigl[ R_{\gamma} \ast C^{-1}f\bigr](t),\quad t\geq 0.
\end{align*}
Due to the closedness of ${\mathcal A}$, the only thing that remained to be proved is 
\begin{align*}
\Bigl(g_{1}\ast\bigl(S_{\gamma} \ast C^{-1}f\bigr)^{\prime}\Bigr)(t)-\bigl(g_{1}\ast f\bigr)(t)\in {\mathcal A} \bigl[ g_{1}\ast R_{\gamma} \ast C^{-1}f\bigr](t),\quad t\geq 0,
\end{align*}
i.e., due to \eqref{milanche},
\begin{align*}
\bigl(S_{\gamma} & \ast C^{-1}f\bigr)(t)-\bigl(g_{1}\ast f\bigr)(t)\in {\mathcal A} \bigl[ g_{\gamma}\ast S_{\gamma} \ast C^{-1}f\bigr](t),\quad t\geq 0.
\end{align*}
This follows from Lemma \ref{integracija-tricky} and the inclusion $S_{\gamma}(t)x-Cx\in {\mathcal A}\bigl( g_{\gamma}\ast S_{\gamma}(\cdot)x \bigr)(t),$ $t\geq 0,$ $x\in X.$
The remaining part of theorem is a consequence of Lemma \ref{andrade-wer}.
\end{proof}

 Following our analyses from \cite[Subsection 2.2.5]{FKP}, it will be convenient to introduce the following definition:

\begin{defn}\label{sgamma}
We say that a continuous function $t\mapsto u(t),$ $t\geq 0$ is a 
mild solution of the semilinear fractional Cauchy inclusion (DFP)$_{f,\gamma,s}$ iff 
$$
u(t)=S_{\gamma}(t)C^{-1}x_{0}+\int^{t}_{0}(t-s)^{\gamma-1}P_{\gamma}(t-s)C^{-1}f(s,u(s))\, ds,\quad t\geq 0.
$$
\end{defn}

Keeping in mind the estimates \eqref{debil-vuk}-\eqref{debil-prim-vuk} and the foregoing arguments, it is almost straightforward to formulate and prove the following analogues of 
Theorem \ref{stepa-vuk}-Theorem \ref{stepa-vuk-frimex} and Corollary \ref{kretinjo-vuk}-Corollary \ref{kret-vuk}:

\begin{thm}\label{stepa-vukk}
Suppose that $I=[0,\infty)$ and the following conditions hold:
\begin{itemize}
\item[(i)] $g \in APS^{p}(I \times X : X)  $ with  $p > 1, $ and there exist a number $ r\geq \max (p, p/p -1)$ and a function $ L_{g}\in L_{S}^{r}(I:X) $ such that \eqref{vbnmp} holds.
\item[(ii)] $C^{-1}f(t,x)=g(t,x)+q(t,x)$ for all $t\geq 0$ and $x\in X,$ where $\hat{q}\in C_{0}(I \times X : L^{q}([0,1]:X))$
and $q=pr/p+r.$

Set 
\begin{align*}
q':=\infty,\mbox{ provided }r=p/p-1 \mbox{ and }q':=\frac{pr}{pr-p-r},\mbox{ provided }r>p/p-1.
\end{align*}

Assume also that:
\item[(iii)] 
$
q'(\gamma -1)>-1, 
$
\item[(iv)] 
\emph{\eqref{vbnmp}} holds with functions $f(\cdot,\cdot)$ and $L_{f}(\cdot)$ replaced therein by the functions $C^{-1}f(\cdot,\cdot)$ and $L_{C^{-1}f}(\cdot)$, respectively, for a.e. $t>0$, where $L_{C^{-1}f}(\cdot)$ is a locally bounded positive function satisfying
$A_{n}<1$ for some $n\in {\mathbb N}.$
\end{itemize}
Then there exists a unique asymptotically almost periodic solution of inclusion $\emph{(DFP)}_{f,\gamma,s}.$
\end{thm}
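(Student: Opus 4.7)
The plan is to follow closely the architecture of the proof of Theorem \ref{stepa-vuk}, carrying over each step to the $C$-regularized framework of Subsection \ref{urban}. Throughout, let $\Upsilon : C_{b}([0,\infty):X) \to C_{b}([0,\infty):X)$ denote the natural adaptation
$$
(\Upsilon x)(t) := S_{\gamma}(t)C^{-1}x_{0} + \int_{0}^{t}(t-s)^{\gamma-1}P_{\gamma}(t-s) C^{-1}f(s,x(s))\, ds, \quad t \geq 0,
$$
and let the constants $A_{n}$ be defined exactly as in Subsection \ref{tarantino} but with $L_{f}(\cdot)$ replaced by $L_{C^{-1}f}(\cdot)$.

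First, I would verify that $\Upsilon$ acts from $AAP([0,\infty):X)$ into itself. Fix $x \in AAP([0,\infty):X)$; since by \cite[Theorem 2.4]{zhang-c-prim} the range $\{x(t):t\geq 0\}$ is relatively compact, the full hypothesis list of Lemma \ref{bibl} is satisfied for the decomposition $C^{-1}f(t,y)=g(t,y)+q(t,y)$ provided by (i)--(ii). Hence $C^{-1}f(\cdot,x(\cdot)) \in AAPS^{q}([0,\infty):X)$ for $q=pr/(p+r)$, with H\"older conjugate $q'$ as in the statement.

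Next, I would transfer the almost periodicity through the convolution. The operator family $R_{\gamma}(t):=t^{\gamma-1}P_{\gamma}(t)$ in the present setting satisfies, by \eqref{debil-vuk}--\eqref{debil-prim-vuk},
$$
\|R_{\gamma}(t)\| \leq M_{3}\, t^{\gamma-1}, \ t\in(0,1], \qquad \|R_{\gamma}(t)\| \leq M_{4}\, t^{-\gamma-1}, \ t\geq 1,
$$
so the growth rates are exactly those of the family analyzed in Lemma \ref{andrade-wer} with the parameter $\beta$ replaced by $1$. Condition (iii), $q'(\gamma-1)>-1$, is precisely the integrability requirement needed to replicate the proof of Lemma \ref{andrade-wer} verbatim. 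Combined with Remark \ref{andrade-wer-remark}, this shows that $t\mapsto \int_{0}^{t}R_{\gamma}(t-s)C^{-1}f(s,x(s))\,ds$ belongs to $AAP([0,\infty):X)$. For the linear term, the bound $\|S_{\gamma}(t)\|\leq M_{4}t^{-\gamma}$ from \eqref{debil-prim-vuk} forces $S_{\gamma}(t)C^{-1}x_{0}\to 0$ as $t\to\infty$, so this term lies in $C_{0}([0,\infty):X)\subseteq AAP([0,\infty):X)$. Therefore $\Upsilon(AAP([0,\infty):X))\subseteq AAP([0,\infty):X)$.

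The remaining step is the fixed point argument. A direct iterated application of (iv) and the definition of $A_{n}$ (using the $C^{-1}f$-version of \eqref{vbnmp}) yields, exactly as in \eqref{storn-vuk},
$$
\bigl\|\Upsilon^{n}u - \Upsilon^{n}v\bigr\|_{\infty} \leq A_{n}\,\|u-v\|_{\infty}, \quad u,v \in AAP([0,\infty):X).
$$
Since $A_{n}<1$ for some $n\in{\mathbb N}$, the Weissinger/iterated-contraction extension of the Banach fixed point theorem produces a unique fixed point $u$ in $AAP([0,\infty):X)$, which is by definition the required asymptotically almost periodic mild solution. Uniqueness among all mild solutions follows by inserting the bound $\|R_{\gamma}(t)\|\leq M_{3}t^{\gamma-1}$ on $(0,1]$ into the difference $\|u(t)-v(t)\|$ and invoking the generalized Gronwall lemma \cite[Lemma~6.19]{Diet}, precisely as in the last paragraph of the proof of Theorem \ref{stepa-vuk}.

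The genuinely delicate step is the second one: transferring Lemma \ref{andrade-wer} to this context. The removal of the factor $\beta$ (coming from the absence of a singularity at zero in the $C$-regularized semigroup) is the only substantive change, but one must make sure that the Stepanov norm estimates in the proof of Lemma \ref{andrade-wer} survive with $\beta=1$ under the weaker hypothesis $q'(\gamma-1)>-1$; this is straightforward because the same H\"older splittings of $\int_{0}^{t/2}$, $\int_{t/2}^{t}$ and $\int_{t}^{\infty}$ apply unchanged, and Remark \ref{andrade-wer-remark} is exactly tailored to accommodate this substitution.
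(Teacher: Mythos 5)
Your proposal is correct and follows essentially the same route the paper intends: the paper gives no separate proof of Theorem \ref{stepa-vukk}, stating only that it follows from the estimates \eqref{debil-vuk}--\eqref{debil-prim-vuk} and ``the foregoing arguments,'' i.e.\ by rerunning the proof of Theorem \ref{stepa-vuk} with $C^{-1}f$ in place of $f$ and with the growth rates of $(S_{\gamma}(t))_{t>0}$, $(R_{\gamma}(t))_{t>0}$ corresponding to $\beta=1$, exactly as you do via Lemma \ref{bibl}, Lemma \ref{andrade-wer} with Remark \ref{andrade-wer-remark}, the iterate estimate \eqref{storn-vuk} with $L_{C^{-1}f}$, the Weissinger-type fixed point theorem, and the Gronwall argument from \cite[Lemma 6.19]{Diet}. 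You also correctly identify the only substantive change --- condition (iii) $q'(\gamma-1)>-1$ replacing $q'(\gamma\beta-1)>-1$ because the singularity of $R_{\gamma}(\cdot)$ at zero is now $t^{\gamma-1}$ --- so nothing further is needed.
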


\begin{thm}\label{stepa-vuk-frimexx}
Let $I =[0,\infty),$ and let $p>1.$
Suppose that the following conditions hold:
\begin{itemize}
\item[(i)] $g \in APS^{p}(I \times X : X)  $ with  $p \geq 1, $ and there exists a constant $L>0$ such that \eqref{vbnmp-frim} holds with the function $f(\cdot, \cdot)  $ replaced by the function $g(\cdot, \cdot)  $ therein.
\item[(ii)] $C^{-1}f(t,x)=g(t,x)+q(t,x)$ for all $t\geq 0$ and $x\in X,$ where $\hat{q}\in C_{0}(I \times X : L^{p}([0,1]:X)).$
\item[(iii)] $
\frac{p}{p-1}(\gamma -1)>-1.$
\item[(iv)] 
\emph{\eqref{vbnmp}} holds with functions $f(\cdot,\cdot)$ and $L_{f}(\cdot)$ replaced therein by the functions $C^{-1}f(\cdot,\cdot)$ and $L_{C^{-1}f}(\cdot)$, respectively, for a.e. $t>0$, where $L_{C^{-1}f}(\cdot)$ is a locally bounded positive function satisfying
$A_{n}<1$ for some $n\in {\mathbb N}.$
\end{itemize}
Then there exists a unique asymptotically almost periodic solution of inclusion $\emph{(DFP)}_{f,\gamma,s}.$
\end{thm}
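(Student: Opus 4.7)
The proof closely parallels that of Theorem \ref{stepa-vuk-frimex}, with the only essential change being that the operator families $(S_{\gamma}(t))_{t>0}$ and $(P_{\gamma}(t))_{t>0}$ now arise from the $C$-regularized semigroup framework of Subsection \ref{urban}, so we must invoke the estimates \eqref{debil-vuk}--\eqref{debil-prim-vuk} in place of \eqref{debil}--\eqref{debil-prim}, and the mild solution is now the fixed point of $\Upsilon$ in the sense of Definition \ref{sgamma}, i.e.\ with $C^{-1}f(s,x(s))$ in the integrand.

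The plan is as follows. First, for any $x\in AAP([0,\infty):X)$, the range $\{x(t):t\geq 0\}$ is relatively compact (by \cite[Theorem 2.4]{zhang-c-prim}), so hypotheses (i)--(ii) together with Lemma \ref{bibl-frimaonji} yield $C^{-1}f(\cdot, x(\cdot))\in AAPS^{p}([0,\infty):X)$. Second, the function $t\mapsto R_{\gamma}(t) := t^{\gamma-1}P_{\gamma}(t)$ now satisfies $\|R_{\gamma}(t)\|\leq M_{3}\,t^{\gamma-1}$ for $t\in (0,1]$ and $\|R_{\gamma}(t)\|\leq M_{4}\,t^{-\gamma-1}$ for $t\geq 1$ by \eqref{debil-vuk}--\eqref{debil-prim-vuk}; therefore condition (iii), which here reads $\tfrac{p}{p-1}(\gamma-1)>-1$, is precisely what is needed to apply Lemma \ref{andrade-wer} (together with Remark \ref{conclusion} and Remark \ref{andrade-wer-remark}) with $q=p$ and $q'=p/(p-1)$. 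Combined with $\lim_{t\to\infty}S_{\gamma}(t)C^{-1}x_{0}=0$ (again from \eqref{debil-prim-vuk}), this shows $\Upsilon:AAP([0,\infty):X)\to AAP([0,\infty):X)$ is well-defined.

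Third, exactly as in the derivation of \eqref{storn-vuk}, an iterated application of the Lipschitz hypothesis (iv) (with $L_{C^{-1}f}$ in place of $L_{f}$) yields, for all $u,v\in BUC([0,\infty):X)$ and all $n\in{\mathbb N}$,
$$
\bigl\|\Upsilon^{n}u-\Upsilon^{n}v\bigr\|_{\infty}\leq A_{n}\,\|u-v\|_{\infty}.
$$
Since $A_{n}<1$ for some $n$, the standard extension of Banach's contraction principle (to maps with a contractive iterate) produces a unique fixed point $u\in AAP([0,\infty):X)$, which is the desired asymptotically almost periodic mild solution of (DFP)$_{f,\gamma,s}$.

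Finally, uniqueness among all mild solutions (not just the a.a.p.\ ones) is obtained exactly as at the end of the proof of Theorem \ref{stepa-vuk}: if $u(\cdot)$ and $v(\cdot)$ are two mild solutions, then \eqref{debil-vuk} gives
$$
\|u(t)-v(t)\|\leq M_{3}\int^{t}_{0}(t-s)^{\gamma-1}L_{C^{-1}f}(s)\,\|u(s)-v(s)\|\,ds,\quad t\geq 0,
$$
and the singular Gronwall-type inequality \cite[Lemma 6.19, p.~111]{Diet} forces $u\equiv v$ on each compact subinterval. The main technical point to watch is the correct exponent in condition (iii): here there is no $\beta$ because the absence of analyticity means $\|P_{\gamma}(t)\|$ is only bounded near zero rather than blowing up as $t^{\gamma(\beta-1)}$, so the near-zero singularity of $R_{\gamma}$ reduces from $t^{\gamma\beta-1}$ to $t^{\gamma-1}$.
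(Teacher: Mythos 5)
Your proof is correct and takes essentially the same route the paper intends: the paper offers no separate proof of this theorem, stating only that it is ``almost straightforward'' from the estimates \eqref{debil-vuk}--\eqref{debil-prim-vuk} and the foregoing arguments, and your write-up is exactly that adaptation (Lemma \ref{bibl-frimaonji} for the composition with $C^{-1}f$, Lemma \ref{andrade-wer} via Remarks \ref{conclusion} and \ref{andrade-wer-remark} using $\|R_{\gamma}(t)\|\leq M_{3}t^{\gamma-1}$ near zero and $\|R_{\gamma}(t)\|\leq M_{4}t^{-\gamma-1}$ at infinity, the iterate estimate \eqref{storn-vuk} with $L_{C^{-1}f}$ plus the extension of the contraction principle, and the singular Gronwall inequality of \cite[Lemma 6.19, p.~111]{Diet} for uniqueness). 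Your closing observation that condition (iii) is the $\beta=1$ analogue of $q'(\gamma\beta-1)>-1$, reflecting the boundedness of $(P_{\gamma}(t))_{t>0}$ near zero in the non-analytic setting, is precisely the point on which the paper's reduction rests.
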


\begin{cor}\label{kretinjo-vukk}
Suppose that $I=[0,\infty),$ the function $C^{-1}f(\cdot,\cdot)$ is asymptotically almost periodic and \emph{\eqref{vbnmp}} holds with functions $f(\cdot,\cdot)$ and $L_{f}(\cdot)$ replaced therein by the functions $C^{-1}f(\cdot,\cdot)$ and $L_{C^{-1}f}(\cdot)$, respectively, for a.e. $t>0$, where $L_{C^{-1}f}(\cdot)$ is a locally bounded positive function satisfying
$A_{n}<1$ for some $n\in {\mathbb N}.$ Then there exists a unique asymptotically almost periodic solution of inclusion $\emph{(DFP)}_{f,\gamma,s}.$
\end{cor}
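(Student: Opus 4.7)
The plan is to derive this corollary directly from Theorem~\ref{stepa-vukk}, mirroring the way Corollary~\ref{kretinjo-vuk} was deduced from Theorem~\ref{stepa-vuk-frimex}. Since $C^{-1}f$ is two-parameter asymptotically almost periodic, the standard embedding of $AAP$ into $AAPS^{p}$ noted after Definition~\ref{definicija} places it in $AAPS^{p}([0,\infty)\times X : X)$ for every $p \geq 1$; moreover, its canonical decomposition reads $C^{-1}f = g + \tilde{q}$ with $g \in AP(\mathbb{R} \times X : X)$ and $\tilde{q} \in C_{0}([0,\infty) \times X : X)$, so that in particular $\widehat{\tilde{q}} \in C_{0}([0,\infty) \times X : L^{p}([0,1]:X))$ for every $p \geq 1$.

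The next step is an exponent chase to verify the hypotheses of Theorem~\ref{stepa-vukk}. I would choose $p > 1$ and $r \geq \max(p,\, p/(p-1))$ large enough that $1/p + 1/r < \gamma$, which is possible because $\gamma \in (0,1)$. Setting $q := pr/(p+r)$ gives $1/q = 1/p + 1/r$, and consequently $q' = (1 - 1/p - 1/r)^{-1} < (1-\gamma)^{-1}$, which is exactly the condition $q'(\gamma - 1) > -1$ appearing as hypothesis (iii) of Theorem~\ref{stepa-vukk}. Hypothesis (ii) of that theorem is the decomposition just recorded, and (iv) is a verbatim restatement of the corollary's Lipschitz assumption on $C^{-1}f$.

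Hypothesis (i) of Theorem~\ref{stepa-vukk} is the technically delicate step. The almost-periodic component $g$ should inherit a Lipschitz-type control from $C^{-1}f$ by a standard Bohr argument: picking a sequence $\tau_{n}$ of $\epsilon$-periods along which shifted copies of $C^{-1}f$ converge, the pointwise bound $\|C^{-1}f(t+\tau_{n},x) - C^{-1}f(t+\tau_{n},y)\| \leq L_{C^{-1}f}(t+\tau_{n})\|x-y\|$ passes to the limit and yields $\|g(t,x) - g(t,y)\| \leq L_{g}(t)\|x-y\|$ with $L_{g}$ realized as an appropriate translation limit of $L_{C^{-1}f}$; in particular the $S^{r}$-norm of $L_{g}$ is dominated by that of $L_{C^{-1}f}$. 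Applying Theorem~\ref{stepa-vukk} with these choices then delivers the unique asymptotically almost periodic mild solution of (DFP)$_{f,\gamma,s}$.

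The main obstacle I expect is verifying that the locally bounded Lipschitz function $L_{C^{-1}f}$ is in fact $S^{r}$-bounded, which is what is needed for $L_{g} \in L^{r}_{S}(I : X)$. In the present setting this should follow from the $A_{n} < 1$ condition together with the decay estimates \eqref{debil-vuk}--\eqref{debil-prim-vuk}, since otherwise the iterated integrals defining $A_{n}$ would diverge. Once this technical point is secured, the rest of the argument is the exponent computation above and a direct invocation of Theorem~\ref{stepa-vukk}.
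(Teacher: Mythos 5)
Your overall strategy---reduce the corollary to one of the semilinear theorems of the subsection via the embedding of $AAP$ into $AAPS^{p}$---is the right shape, but you picked the wrong theorem, and the step you yourself flag as delicate does in fact fail. Routing through Theorem \ref{stepa-vukk} obliges you to manufacture a function $L_{g}\in L_{S}^{r}(I:X)$ for the almost periodic part, and your proposed rescue---that $A_{n}<1$ forces $L_{C^{-1}f}$ to be $S^{r}$-bounded, ``since otherwise the iterated integrals defining $A_{n}$ would diverge''---is false. First, $A_{n}$ is built from the actual operator norms $\|R_{\gamma}(\cdot)\|$, for which \eqref{debil-vuk}--\eqref{debil-prim-vuk} supply only upper bounds; these norms can decay arbitrarily fast, so smallness of $A_{n}$ imposes essentially no constraint on $L_{C^{-1}f}$. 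Second, even against the extremal kernel profile $t^{\gamma-1}$ near zero, a spike construction defeats the claim: let $L_{C^{-1}f}=h_{n}$ on $[n,n+\delta_{n}]$ and $0$ elsewhere, with $h_{n}\delta_{n}^{\gamma}=2^{-n}$ and $\delta_{n}\rightarrow 0$ rapidly; each spike contributes $O(h_{n}\delta_{n}^{\gamma})$ to the convolution plus summable decaying tails, so $A_{1}$ is finite and can be scaled below $1$, while $\|L_{C^{-1}f}\|_{L^{r}[n,n+1]}\geq h_{n}\delta_{n}^{1/r}=2^{-n}\delta_{n}^{1/r-\gamma}\rightarrow\infty$ whenever $r>1/\gamma$---and $r>1/\gamma$ is precisely what your own exponent choice $1/p+1/r<\gamma$ forces. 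The Bohr translation-limit argument producing $L_{g}$ from a merely locally bounded measurable $L_{C^{-1}f}$ is likewise unsupported: such limits need not exist in any usable sense, which is why Lemma \ref{bibl} takes $L_{g}\in L_{S}^{r}(I:X)$ as a hypothesis rather than deriving it.

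The paper's own (one-line) proof avoids all of this: Corollary \ref{kretinjo-vukk} is the analogue of Corollary \ref{kretinjo-vuk}, deduced from the constant-Lipschitz theorem, here Theorem \ref{stepa-vuk-frimexx}, not from Theorem \ref{stepa-vukk}. Since $C^{-1}f$ is asymptotically almost periodic as a two-parameter function, it belongs to $AAPS^{p}([0,\infty)\times X:X)$ for every $p>1$, with the canonical decomposition furnishing (i)--(ii) of Theorem \ref{stepa-vuk-frimexx} (here the constant-Lipschitz control on the almost periodic part $g$ is inherited from the assumed Lipschitz control on $C^{-1}f$, a point the paper glosses already in Corollary \ref{kretinjo-vuk}); and since $\lim_{p\rightarrow+\infty}\frac{p}{p-1}(\gamma-1)=\gamma-1>-1$, hypothesis (iii) holds for $p$ sufficiently large, while (iv) is verbatim the corollary's assumption. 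That single-exponent route rests on Lemma \ref{bibl-frimaonji}, which never requires a Stepanov-bounded Lipschitz function, so the obstacle your proof stalls on simply does not arise. Your decomposition step and your computation $q'=(1-1/p-1/r)^{-1}<(1-\gamma)^{-1}$ are correct as far as they go; the repair is to discard the detour through Theorem \ref{stepa-vukk} and invoke Theorem \ref{stepa-vuk-frimexx} as above.
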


A very simple calculation involving the estimates \eqref{debil}-\eqref{debil-prim} shows that 
$$
A_{1}\leq L\Biggl[\frac{M_{3}}{\gamma }+\frac{M_{4}}{\gamma}\Biggr],
$$
provided the requirements of Corollary \ref{kretinjo-vuk}.
Hence, we have the following:

\begin{cor}\label{kret-vukk}
Suppose that $I=[0,\infty),$ the function $C^{-1}f(\cdot,\cdot)$ is asymptotically almost periodic and \emph{\eqref{vbnmp-frim}} holds for some $L\in [ 0, (\frac{M_{3}}{\gamma }+\frac{M_{4}}{\gamma})^{-1} ).$ Then there exists a unique asymptotically almost periodic solution of inclusion $\emph{(DFP)}_{f,\gamma,s}.$
\end{cor}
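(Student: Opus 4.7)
The plan is to reduce the statement directly to Corollary \ref{kretinjo-vukk} by verifying that the Lipschitz bound on $C^{-1}f$ forces $A_{1}<1$, so that one can take $n=1$ in the iteration constant.

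First, I would observe that the hypothesis amounts to saying that \eqref{vbnmp} holds with $f(\cdot,\cdot)$ replaced by $C^{-1}f(\cdot,\cdot)$ and with the constant Lipschitz function $L_{C^{-1}f}(\cdot)\equiv L$, which is trivially a locally bounded positive function. Asymptotic almost periodicity of $C^{-1}f(\cdot,\cdot)$ is assumed outright. Thus all hypotheses of Corollary \ref{kretinjo-vukk} will be satisfied as soon as I can exhibit an integer $n$ with $A_{n}<1$; I would aim for $n=1$.

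Second, I would compute $A_{1}$ explicitly from the definition. Since $R_{\gamma}(t)=t^{\gamma-1}P_{\gamma}(t)$, the bound \eqref{debil-vuk} gives $\|R_{\gamma}(t)\|\leq M_{3}t^{\gamma-1}$ on $(0,1]$, while \eqref{debil-prim-vuk} gives $\|R_{\gamma}(t)\|\leq M_{4}t^{-\gamma-1}$ on $[1,\infty)$. Because $L_{C^{-1}f}$ is the constant $L$, the innermost integral in the definition of $A_{1}$ collapses to a single convolution and, after splitting the integration at $s=1$,
\[
A_{1}=L\sup_{t\geq 0}\int_{0}^{t}\|R_{\gamma}(s)\|\,ds\leq L\Biggl[\int_{0}^{1}M_{3}s^{\gamma-1}\,ds+\int_{1}^{\infty}M_{4}s^{-\gamma-1}\,ds\Biggr]=L\Biggl[\frac{M_{3}}{\gamma}+\frac{M_{4}}{\gamma}\Biggr].
\]
The hypothesis $L<(M_{3}/\gamma+M_{4}/\gamma)^{-1}$ then yields $A_{1}<1$. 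Applying Corollary \ref{kretinjo-vukk} gives the unique asymptotically almost periodic mild solution of (DFP)$_{f,\gamma,s}$.

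There is no substantive obstacle: the genuinely analytic content (the composition principle for asymptotically Stepanov almost periodic functions, the mapping property of $\Upsilon$ via Lemma \ref{andrade-wer}, and the extended Banach contraction argument) has already been absorbed into Corollary \ref{kretinjo-vukk}. The only point requiring any care in the plan is handling the two different growth regimes of $R_{\gamma}$ at zero and at infinity correctly; but since $L_{C^{-1}f}$ is constant this reduces to the elementary split above, which is precisely the computation the author flagged just before the statement.
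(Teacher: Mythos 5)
Your proposal is correct and takes essentially the same route as the paper: the author obtains Corollary \ref{kret-vukk} precisely by noting that $A_{1}\leq L\bigl[\frac{M_{3}}{\gamma}+\frac{M_{4}}{\gamma}\bigr]$ (via the bounds \eqref{debil-vuk}--\eqref{debil-prim-vuk} on $R_{\gamma}(t)=t^{\gamma-1}P_{\gamma}(t)$ near zero and at infinity) and then invoking Corollary \ref{kretinjo-vukk} with $n=1$. Your split of $\int_{0}^{\infty}\|R_{\gamma}(s)\|\,ds$ at $s=1$ is exactly the ``very simple calculation'' the paper leaves implicit, and you correctly read the hypothesis as a Lipschitz condition on $C^{-1}f(\cdot,\cdot)$, which is what Corollary \ref{kretinjo-vukk} requires.
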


Theoretical results established here can be applied to a class of abstract degenerate fractional differential equations, see e.g. \cite[Example 2.1]{faviniyagi} and \cite[Example 3.2.14]{FKP} ($C=I$). 
The most important applications with $C\neq I$ can be given to abstract non-degenerate fractional differential equations:

\begin{example}\label{pos-kos-ros}
Assume that $k\in\mathbb{N}$, $a_{\alpha}\in\mathbb{C}$, $0\!\leq\! |\alpha|\!\leq\! k$, $a_{\alpha}\neq 0$
for some $\alpha$ with $|\alpha|=k$, $P(x)=\sum_{|\alpha|\leq k}a_{\alpha}i^{|\alpha|}x^{\alpha}$,
$x\in\mathbb{R}^n$, $c':=\sup_{x\in\mathbb{R}^n}\Re(P(x))<0$,
$X$ is one of the spaces $L^p(\mathbb{R}^n)$ ($1\leq p\leq\infty$), $C_0(\mathbb{R}^n)$, $C_b(\mathbb{R}^n)$,
$BUC(\mathbb{R}^n)$,
$$
P(D):=\sum_{|\alpha|\leq k}a_{\alpha}f^{(\alpha)}
\text{ and } D(P(D)):=\bigl\{f\in E:P(D)f\in E\text{ distributionally}\bigr\}.
$$
Set $n_X:=n|\frac{1}{2}-\frac{1}{p}|$, if $X=L^p(\mathbb{R}^n)$
for some $p\in(1,\infty)$ and $n_X>\frac{n}{2}$, otherwise.
Then it is well known (see e.g. \cite[Example 2.8.6]{knjigah}) that the operator $P(D)$ generates a global $(1-\Delta)^{-n_Xk/2}$-regularized semigroup $(T(t))_{t\geq 0}$
satisfying the estimate $\|T(t)\|=O(e^{-ct}),$ $t\geq 0$ for all $c\in (c',0).$
This immediately implies that we can consider the existence and uniqueness of asymptotically almost periodic solutions of the following semilinear Cauchy problem:
\[\left\{
\begin{array}{l}
{\mathbf D}_{t}^{\gamma}u(t,x)=\sum_{|\alpha|\leq k}a_{\alpha}f^{(\alpha)}(t,x)+f(t,u(t,x)),\ t\geq 0,\ x\in \mathbb{R}^n,\\
 u(0,x)=u_{0}(x),\quad x\in \mathbb{R}^n.
\end{array}
\right.
\]
\end{example}

We close the paper by providing basic information on asymptotically almost periodic solutions of abstract fractional inclusion
\begin{align}\label{li-peng}
D^{\gamma}_{+}u(t)\in {\mathcal A}u(t)+f(t),\quad t\in {\mathbb R},
\end{align}
where $D^{\gamma}_{+}u(t)$ denotes
the Weyl-Liouville fractional derivative of order $\gamma$ and $f : {\mathbb R} \rightarrow X$ (\cite{relaxation-peng}). In this paper,
Mu, Zhoa and Peng have considered various types of (asymptotically) generalized almost periodic and generalized almost automorphic solutions of \eqref{li-peng} provided that the operator ${\mathcal A}$ is single-valued and generates an exponentially decaying $C_{0}$-semigroup.
By applying the Fourier transform in \cite[Lemma 6]{relaxation-peng}, the authors have proposed the following definition of mild solution of \eqref{li-peng}: A continuous function $u: {\mathbb R} \rightarrow X$ is said to be a mild solution of \eqref{li-peng} iff it has the following form
$$
u(t)=\int^{t}_{-\infty}(t-s)^{\gamma -1}P_{\gamma}(t-s)f(s)\, ds,\quad t\in {\mathbb R};
$$
a semilinear analogue is
$$
u(t)=\int^{t}_{-\infty}(t-s)^{\gamma -1}P_{\gamma}(t-s)f(s,u(s))\, ds,\quad t\in {\mathbb R}.
$$
We would like to observe that the method proposed in the proof of \cite[Lemma 6]{relaxation-peng} is completely meaningful in the case that  ${\mathcal A}$ is an MLO generating degenerate semigroup with removable singularity at zero or that
${\mathcal A}$ is an MLO generating an exponentially decaying $C$-regularized semigroup. Therefore, we are in a position to analyze the existence and uniqueness of asymptotically almost periodic solutions ${\mathbb R} \mapsto X$ of (semilinear) fractional Cauchy inclusion \eqref{li-peng} (cf. 
Zhang \cite{zhang-c-prim} for the notion) by using similar arguments to those employed in the proofs of \cite[Theorem 8, Theorem 9, Theorem 17, Theorem 19]{relaxation-peng}. Details can be left to the interested reader.

\end{document}